\documentclass[a4paper,11pt,reqno]{amsart} 
\usepackage{amsmath}
\usepackage{amsfonts}
\usepackage[T1]{fontenc}  
\usepackage[utf8]{inputenc}
\usepackage[mathscr]{euscript}
\usepackage{verbatim}
\usepackage{amssymb,latexsym}
\usepackage{amsthm}
\usepackage{color}
\usepackage{graphicx}
\usepackage[hidelinks]{hyperref}
\usepackage[font=scriptsize]{caption}
\usepackage{amsmath}
\usepackage{subcaption}
\usepackage{float}

\fboxrule0.0001pt \fboxsep0pt
 
\newcommand{\Ab}{\mathbf{A}}

\usepackage[lmargin=3.5 cm,rmargin=3.5 cm,tmargin=3.5cm,bmargin=2.5cm,paper=a4paper]{geometry}
\DeclareMathOperator{\curl}{curl}\DeclareMathOperator{\Div}{div}
  
\DeclareMathOperator{\supp}{supp} \DeclareMathOperator{\dom}{\mathrm {Dom}}

\newtheorem{thm}{Theorem}[section]

\newtheorem{theorem}[thm]{Theorem}

\newtheorem{lemma}[thm]{Lemma}
\newtheorem{proposition}[thm]{Proposition}

\theoremstyle{remark}

\newtheorem{rem}[thm]{Remark}

\newcommand{\nb}{\nabla}
\newcommand{\R}{\mathbb{R}}
\newcommand{\Fb}{\mathbf{F}}

\newcommand{\N}{\mathbb{N}}
\newcommand{\C}{\mathbb{C}}

\newcommand{\Om}{\Omega}
\newcommand{\kp}{\kappa}

\newcommand{\GL}{\mathcal E_{\kappa,H}}

\def\sig#1{\vbox{\hsize=5.5cm
		\kern2cm\hrule\kern1ex
		\hbox to \hsize{\strut\hfil #1 \hfil}}}
\newcommand\signatures[4]{%
	\vspace{3cm}
	\hbox to \hsize{\hfil #1, \today\hfil}
	\vspace{3cm}
	\hbox to \hsize{\quad#2\hfil\hfil #3\quad}
	\vspace{3cm}
	\hbox to \hsize{\hfil#4\hfil}}
\numberwithin{equation}{section}

\title[Lowest energy band function]{Lowest energy band function for magnetic steps}
\author[W. Assaad]{Wafaa Assaad}
\address{Lebanese University,  Doctoral School of Science and Technology, Laboratory of Mathematics,  Lebanon}
\email{wafaa\_assaad@hotmail.com}
\author[A. Kachmar]{Ayman Kachmar}
\address{Lebanese University, Department of Mathematics, Hadat, Lebanon}
\email{ayman.kashmar@gmail.com}
\begin{document}
\maketitle
\begin{abstract}
We study the Schr\"odinger operator in the plane with a step magnetic field function. The bottom of its spectrum is  described by the infimum of the lowest eigenvalue band function, for which we establish the existence and uniqueness of the non-degenerate minimum. We discuss  the curvature effects on the localization properties of magnetic ground states, among other applications.
\end{abstract}

\section{Introduction}\label{sec:int}
\subsection{The planar magnetic step operator}\label{sec:step2}
Let  $a \in [-1,1)\setminus\{0\}$. 
We define the self-adjoint magnetic Schr\"odinger operator on the plane 
\begin{equation}\label{eq:ham_operator}
\mathcal L_a=\partial^2_{x_2}+\big(\partial_{x_1}+i\sigma x_2\big)^2, \quad (x_1,x_2) \in \R^2,
\end{equation}
where $\sigma$ is a step function defined as follows
\begin{equation}\label{eq:sigma1}
\sigma(x_1,x_2)=\mathbf {1}_{\R_+}(x_2)+a\mathbf {1}_{\R_-}(x_2).
\end{equation}

The operator $\mathcal L_a$  is invariant w.r.t. translations in the $x_1$-direction, then it can be fibered and reduced to a family of 1D Shr\"odinger operators on $L^2(\R)$, $\mathfrak h_a[\xi]$, after a  Fourier transform along the $x_1$-axis
(see~\cite{hislop2016band,reed}). The fiber operators $\mathfrak h_a[\xi]$, parametrized by $\xi\in\R$, are defined in Section~\ref{sec:step1}.

We have the following link between the spectra of the operators $\mathcal L_a$ and $\mathfrak h_a[\xi]$ (see~~\cite{hislop2016band} and~\cite[Section~4.3]{fournais2010spectral}):
\begin{equation}\label{eq:L_h}
\mathrm{sp}\big(\mathcal L_a\big)=\overline{\bigcup_{\xi \in \R} \mathrm{sp}\big(\mathfrak h_a[\xi] \big)}.
\end{equation}
Consequently, the bottom of the spectrum of $\mathcal L_a$, denoted by $\beta_a$, can be computed by  minimizing  the ground state energies of the fibered operators $\mathfrak h_a[\xi] $ (see \eqref{eq:beta} below).

\subsection{The lowest energy band function}\label{sec:step1}

Let  $a \in [-1,1)\backslash\{0\}$. For all $\xi\in\R$, 
we introduce the operator 
$$\mathfrak h_a[\xi]=-\frac{d^2}{d\tau^2}+V_a(\xi,\tau),$$
 with the  potential $V_a(\xi,\tau)=\big(\xi+\sigma(\tau)\tau\big)^2$, where
\begin{equation}\label{eq:potential}
 \sigma(\tau)=\mathbf{1}_{\R_+}(\tau)+a\mathbf{1}_{\R_-}(\tau).
\end{equation}
The domain of $\mathfrak h_a[\xi]$ is given by~:
$${\bf Dom}\big(\mathfrak h_a[\xi]\big)=\left\{u\in B^1(\R)~:~\Big(-\frac{d^2}{d\tau^2}+V_a(\xi,\tau)\Big)u \in L^2(\R)\right\},$$
where  the space $B^n(I)$  is  defined for a positive integer $n$ and an open interval $I\subset \R$ as follows
\begin{equation} \label{eq:B_n}
B^n(I)=\{u \in L^2(I):\,\tau^i
\frac{d^ju}{d\tau^j} \in L^2(I),\,\forall i, j \in \N\ \mathrm{s.t.}\ i+j\leq n \}.
\end{equation} 
The quadratic form associated to $\mathfrak h_a[\xi]$ is
\begin{equation} \label{eq:quad}
q_a[\xi](u)=\int_{ \R} \big(|u'(\tau)|^2+V_a(\xi,\tau)|u(t)|^2 \big)\,d\tau
\end{equation}
defined on $B^1(\R)$.
The operator $\mathfrak h_a[\xi]$ is with compact resolvent. 
We introduce the lowest eigenvalue of this operator (lowest band function)
\begin{equation}\label{mu_a_1}
\mu_a(\xi)=\inf_{u\in B^1(\R),u\neq0} \frac{q_a[\xi](u)}{\|u\|^2_{L^2(\R)}}.
\end{equation}
This is a simple eigenvalue, to which corresponds a unique positive $L^2$-normalized eigenfunction, $\varphi_{a,\xi}$, i.e. satisfying (see~\cite[Proposition~A.2]{Assaad2019}).
\begin{equation}\label{eq:phi-a-p}
\varphi_{a,\xi}>0,~(\mathfrak h_a[\xi]-\mu_a[\xi])\varphi_{a,\xi}=0~\&~\int_{\R}|\varphi_{a,\xi}(\tau)|^2\,d\tau=1. 
\end{equation}
Moreover, the above eigenvalue and eigenfunction depend smoothly on $\xi$ (see \cite{DaHe,hislop2016band}),
\begin{equation}\label{eq:anal_a}
\xi\mapsto \mu_a(\xi)\ \mathrm{and}\ \xi\mapsto \varphi_{a,\xi}\ \mbox{are in}\
C^\infty.
\end{equation}
We introduce the \emph{step constant} (at $a$) as follows
\begin{equation}\label{eq:beta}
\beta_a:=\inf_{\xi \in \R} \mu_a(\xi),
\end{equation}
along with the celebrated de\,Gennes constant
\begin{equation}\label{eq:deG}
\Theta_0:=\beta_{-1}.
\end{equation}
Our main result is the following.

\begin{thm}\label{thm:main}
Given $a\in(-1,0)$, there exists a unique  $\zeta_a\in\R$ such that
	\[\beta_a=\mu_a(\zeta_a).\]
Furthermore,  the following holds.
\begin{enumerate}
\item $\zeta_a<0$ and satisfies $\mu_a''(\zeta_a)>0 $.
	\item $|a|\Theta_0<\beta_a<\Theta_0$.
	\item The ground state   $\phi_a:=\varphi_{a,\zeta_a}$ satisfies $\phi'_{a}(0)<0$.
\end{enumerate}
\end{thm}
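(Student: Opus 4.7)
The plan proceeds in three stages: the bounds on $\beta_a$ together with existence of the minimizer, the uniqueness and non-degeneracy of $\zeta_a$, and the sign of $\phi_a'(0)$.

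For the asymptotics, the pointwise bound $V_a(\xi,\tau)\ge\xi^2$ gives $\mu_a(\xi)\to+\infty$ as $\xi\to+\infty$; as $\xi\to-\infty$ the potential develops two harmonic wells at $\tau=-\xi$ and $\tau=-\xi/a$ of frequencies $1$ and $|a|$, and an upper bound via a Gaussian trial function concentrated in the $\R_-$-well, together with a Neumann-bracketing lower bound, yields $\mu_a(\xi)\to|a|$. The upper bound $\beta_a<\Theta_0$ would be obtained by testing $\mathfrak h_a[\xi_0]$ against the positive even ground state $u_0$ of $\mathfrak h_{-1}[\xi_0]$, where $\xi_0$ is the de Gennes minimizer, after checking that
\[
q_a[\xi_0](u_0)-q_{-1}[\xi_0](u_0)=\int_{\R_-}\bigl[(\xi_0+a\tau)^2-(\xi_0-\tau)^2\bigr]u_0^2\,d\tau<0.
\]
The lower bound $\beta_a>|a|\Theta_0$ would follow from Neumann bracketing at the interface $\tau=0$, combined with the scaling $\tau\mapsto -\tau/\sqrt{|a|}$ that identifies the half-line Neumann problem on $\R_-$ with $|a|$ times the de Gennes Neumann problem; strictness is upgraded from the positivity of $\varphi_{a,\zeta_a}$ on both half-lines. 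Together with a sharp trial-function upper bound yielding $\beta_a<|a|$ and the continuity of $\mu_a$, these force the infimum to be attained at a finite $\zeta_a$, and Feynman-Hellmann at a critical point,
\[
0=\mu_a'(\zeta_a)=2\int_\R\bigl(\zeta_a+\sigma(\tau)\tau\bigr)\varphi_{a,\zeta_a}^2\,d\tau,
\]
yields $\zeta_a=-\int\sigma\tau\varphi^2<0$ since $\sigma(\tau)\tau\ge 0$ on $\R$ with $\varphi>0$.

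For the uniqueness and non-degeneracy, the strategy is to prove $\mu_a''(\zeta)>0$ at every critical point $\zeta$ of $\mu_a$; once this is in hand, a smooth function on $\R$ whose critical points are all strict local minima has at most one such point (otherwise the intermediate value theorem forces a local maximum in between, where $\mu_a''\le 0$). Using the second-order perturbation formula
\[
\mu_a''(\zeta)=2-8\bigl\langle g\varphi,(\mathfrak h_a-\mu_a)^{-1}\big|_{\varphi^\perp}(g\varphi)\bigr\rangle,\qquad g(\tau)=\zeta+\sigma(\tau)\tau,
\]
together with the operator identity $(\mathfrak h_a-\mu_a)(\tau\varphi)=-2\varphi'$ and the auxiliary calculation $\langle \varphi',(\mathfrak h_a-\mu_a)^{-1}|_{\varphi^\perp}\varphi'\rangle=\tfrac14$, one reduces $\mu_a''(\zeta)>0$ to the resolvent estimate $\langle g\varphi,(\mathfrak h_a-\mu_a)^{-1}|_{\varphi^\perp}(g\varphi)\rangle<\tfrac14$. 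This is expected to be the principal technical obstacle and is to be handled by a careful virial/commutator argument exploiting the piecewise structure of $V_a$.

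Finally, for (3), multiplying the eigenvalue equation by $\phi_a'$ and integrating by parts on $\R_+$ and $\R_-$ separately (retaining the boundary contributions at $\tau=0$), then combining with $\mu_a'(\zeta_a)=0$, yields the split virials
\[
\int_{\R_+}(\zeta_a+\tau)\phi_a^2\,d\tau=\int_{\R_-}(\zeta_a+a\tau)\phi_a^2\,d\tau=0
\]
together with the Rayleigh-type identity $(\phi_a'(0))^2=(\zeta_a^2-\beta_a)\phi_a(0)^2$. Since $V_a(\zeta_a,0)=\zeta_a^2>\beta_a$, this forces $\phi_a'(0)\neq 0$. The negative sign reflects the asymmetric double-well geometry: the well on $\R_-$ (at $\tau=-\zeta_a/a$) has harmonic frequency $|a|<1$ and dominates the mass of $\phi_a$, placing the intermediate local minimum of the positive function $\phi_a$ on the $\R_+$ side of the interface and forcing $\phi_a'(0)<0$. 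Making this last step rigorous---either via a perturbation argument starting from the symmetric case $a=-1$ (where $\phi_{-1}'(0)=0$), or via a parabolic-cylinder representation of $\phi_a$ on each half-line and a matching analysis at $\tau=0$---completes the proof.
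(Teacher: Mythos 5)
Your overall plan---asymptotics and bounds on $\beta_a$ give existence, non-degeneracy at every critical point gives uniqueness, an interface identity gives the sign of $\phi_a'(0)$---is the right skeleton, and your Feynman--Hellmann derivation of $\zeta_a<0$ is correct. However, all three of the substantive steps are left unfinished, and in each case the difficulty you defer is precisely the one the paper resolves, using a single idea that is absent from your proposal: viewing the restriction of $\varphi_{a,\xi}$ to $\R_+$ as a \emph{Robin} eigenfunction with boundary parameter $\gamma_a(\xi)=\varphi_{a,\xi}'(0)/\varphi_{a,\xi}(0)$, so that $\mu_a(\xi)$ is locally a Robin band function $\lambda^j(\gamma_a(\xi),\xi)$ for the half-line model of Section~\ref{sec:robin}.

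On the upper bound $\beta_a<\Theta_0$, your trial state is the even ground state $u_0$ of $\mathfrak h_{-1}[\xi_0]$, and you need $\int_{\R_-}\bigl[(\xi_0+a\tau)^2-(\xi_0-\tau)^2\bigr]u_0^2\,d\tau<0$. But the integrand factors as $(a+1)\tau\bigl[2\xi_0+(a-1)\tau\bigr]$, which vanishes at $\tau=2\xi_0/(1-a)<0$ and \emph{changes sign} on $\R_-$: in particular it is strictly positive near the bottom of the $\R_-$ well at $\tau=\xi_0$, where $u_0^2$ has significant mass. So the inequality you want to ``check'' is not a pointwise sign statement, it is a genuine competition between two regions, and nothing in your proposal controls it. The paper instead glues $u^N_{\xi_0}$ on $\R_+$ to a dilated copy $c\,u^N_{\xi_0/\sqrt{|a|}}$ on $\R_-$, which after scaling reduces the sign question to $|a|\lambda^N(\xi_0/\sqrt{|a|})-\lambda^N(\xi_0)=|a|\bigl(\lambda^N(\alpha)-\alpha^2\bigr)$ with $\alpha=\xi_0/\sqrt{|a|}\in(-1,\xi_0)$, and then to $(\lambda^N)'(\alpha)<0$ via the derivative formula~\eqref{eq:der_gamma}; this sign is unambiguous by monotonicity of $\lambda^N$ left of its minimizer.

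On uniqueness and non-degeneracy, your second-order perturbation formula for $\mu_a''$ is correct, but the resolvent bound $\langle g\varphi,(\mathfrak h_a-\mu_a)^{-1}|_{\varphi^\perp}g\varphi\rangle<1/4$ is exactly the hard part, and declaring it ``to be handled by a careful virial/commutator argument'' leaves the proof with a hole where the proof should be. The paper sidesteps this: writing $\mu_a(\xi)=\lambda^j(\gamma_a(\xi),\xi)$ near a critical point $\eta$, the identities \eqref{eq:der_gamma}--\eqref{eq:der_gamma1} combined with \eqref{eq:mu_deriv} force $\gamma_a'(\eta)=0$, and then differentiating \eqref{eq:mu_deriv} once more collapses $\mu_a''(\eta)$ to the explicit expression $2(\tfrac1a-1)\eta\,\varphi_{\eta,a}(0)^2$, manifestly positive since $\eta<0$ and $a\in(-1,0)$. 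No resolvent estimate is needed.

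On the sign of $\phi_a'(0)$, your argument that $\phi_a'(0)\neq0$ is circular: the identity $\phi_a'(0)^2=(\zeta_a^2-\beta_a)\phi_a(0)^2$ (from $\mu_a'(\zeta_a)=0$ and \eqref{eq:mu_deriv}) shows $\zeta_a^2\geq\beta_a$, with equality if and only if $\phi_a'(0)=0$, so you cannot appeal to $\zeta_a^2>\beta_a$ to prove $\phi_a'(0)\neq0$. And for the negative sign you explicitly defer to an unwritten perturbation or special-functions argument. The paper's argument is two lines: if $\gamma_a=\phi_a'(0)/\phi_a(0)\geq0$, then $\phi_a|_{\R_+}$ is a Robin eigenfunction with eigenvalue $\beta_a$, so by the min-max principle, monotonicity of $\lambda(\gamma,\cdot)$ in $\gamma$, and $\lambda^N\geq\Theta_0$, one gets $\beta_a\geq\lambda(\gamma_a,\zeta_a)\geq\lambda^N(\zeta_a)\geq\Theta_0$, contradicting the already-established $\beta_a<\Theta_0$. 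Note also the logical dependency this creates: $\beta_a<\Theta_0$ must be proved \emph{first}, which is another reason to be careful about the gap in your upper bound.

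To repair the proposal you should introduce the boundary Robin parameter $\gamma_a(\xi)$ and the half-line family $H[\gamma,\xi]$; once that bridge is in place, each of the three deferred steps becomes short.
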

\begin{rem}\label{rem:main}~
\begin{enumerate}
\item The existence of the minimum $\zeta_a$ was known earlier \cite{ Assaad2019,hislop2016band}. Our contribution establishes the uniqueness of $\zeta_a$ and that it is a non-degenerate minimum. These new properties were only conjectured in \cite{hislop2016band} based on numerical computations.
\item The case $a=-1$ is perfectly understood and can be reduced to the study of the de\,Gennes model (family of harmonic oscillators on the half-axis with Neumann condition at the origin). In this case, we know the existence of the unique and non-degenerate minimum $\zeta_{-1}=-\sqrt{\Theta_0}$, and that the ground state $\phi_{-1}$ is an even function with a vanishing derivative at the origin $(\phi_{-1}'(0)=0$).
\item Our comparison result $\beta_a<\Theta_0$ is also new. It was conjectured in \cite{Assaad2019} based on numerical computations\footnote{Many thanks to  V. Bonnaillie-No\"el for the numerical computations and Fig.~5 in \cite{Assaad2019}.}. This comparison has an interesting application to the existence of superconducting magnetic edge states (see Section~\ref{sec:GL}).
\item The sign of $\phi_a'(0)$ has an important application too, namely in precising the localization properties of ground states for the 
Schr\"odinger operator with magnetic steps and in the large field asymptotics. That will be  discussed in Section~\ref{sec:2D-curv}.
\item In the case $a\in(0,1)$, we have $\beta_a=a$ and $\mu_a(\cdot)$ does  not achieve a minimum. 
\end{enumerate}
\end{rem}

\section{The Robin model on the half line}\label{sec:robin}

We discuss in this section a model operator introduced in \cite{K-jmp, K-rmp}.
Let $\xi$ and $\gamma$ be two real parameters. We introduce the family of harmonic oscillators on $\R_+$,
\begin{equation}\label{eq:H_gamma}
H[\gamma,\xi]=-\frac{d^2}{d\tau^2}+(\tau+\xi)^2,
\end{equation}
with the following operator domain (accommodating functions  satisfying the Robin condition at the origin) 
\begin{equation}\label{eq:domRob}
{\bf Dom}\big(H[\gamma,\xi] \big)=\{u \in B^2(\R_+)~:~ u'(0)=\gamma u(0)\}.
\end{equation}
The quadratic form associated to $H[\gamma,\xi]$ is
$$B^1(\R_+)\ni u\longmapsto q[\gamma,\xi](u)=\int_{\R_+} \Big(|u'(\tau)|^2+|(\tau+\xi)u(\tau)|^2 \Big)\,d\tau +\gamma|u(0)|^2.$$
The operator $H[\gamma,\xi]$ is with compact resolvent, hence its spectrum  is an increasing sequence of eigenvalues $\lambda^j(\gamma,\xi)$, $j\in\N^*$. Furthermore, these eigenvalues are simple (see~\cite[Section~3.2.1]{fournais2010spectral} for the argument). Consequently, we introduce the corresponding orthonormal family of eigenfunctions  $u^j_{\gamma,\xi}$ satisfying
\begin{equation}\label{eq:u-gj>0}
u^j_{\gamma,\xi}(0)>0.
\end{equation}
The condition in \eqref{eq:u-gj>0} determines the \emph{normalized} eigenfunction uniquely, because $u^j_{\gamma,\xi}(0)\not=0$,  otherwise it will vanish everywhere by Cauchy's uniqueness theorem,  since $(u^j_{\gamma,\xi})'(0)=\gamma u^j_{\gamma,\xi}(0)$ and
\[ -\frac{d^2}{d\tau^2}u^j_{\gamma,\xi}+(\tau+\xi)^2u^j_{\gamma,\xi}=\lambda^j(\gamma,\xi)u^j_{\gamma,\xi} ~{\rm on~}\R_+.\]
The perturbation theory  ensures that the functions
 \begin{equation}\label{eq:lambda-anal}
\xi\mapsto \lambda^j(\gamma,\xi),\ \xi\mapsto u^j_{\gamma,\xi},\
\gamma\mapsto \lambda^j(\gamma,\xi),\ \mathrm{and}\ \gamma\mapsto u^j_{\gamma,\xi}\ \mathrm{are}\  C^\infty.
 \end{equation} 
The reader is referred to~\cite{Kato} (for general perturbation theory) and~\cite[Theorem C.2.2]{fournais2010spectral}) for the application in the present context.
 
The first partial derivatives of the eigenvalues with respect to $\xi$ and $\gamma$ are as follows (see \cite{K-jmp, K-rmp})
\begin{align}
\partial_\xi \lambda^j(\gamma,\xi)&=\big(\lambda^j(\gamma,\xi)-\xi^2+\gamma^2\big)|u^j_{\gamma,\xi}(0)|^2,\label{eq:der_gamma}\\
\partial_\gamma \lambda^j(\gamma,\xi)&=|u^j_{\gamma,\xi}(0)|^2.\label{eq:der_gamma1}
\end{align}
Using the min-max principle, the lowest eigenvalue is defined  as follows:
\begin{equation}\label{eq:mu1}
\lambda(\gamma,\xi):=\lambda^1(\gamma,\xi)=\inf {\rm sp}\big(H[\gamma,\xi]\big)=\inf_{u\in B^1(\R),u\neq0} \frac{q[\gamma,\xi](u)}{\|u\|^2_{L^2(\R)}}.
\end{equation}
Note that the \emph{normalized}  ground state,  $u_{\gamma,\xi}$, does not change  sign on $\R_+$, and hence it is positive by our choice in \eqref{eq:u-gj>0}.

For $\gamma\in \R$, we introduce the \emph{de\,Gennes function},
\begin{equation}\label{eq:theta_gamma}
\Theta(\gamma):=\inf_{\xi \in \R} \lambda(\gamma,\xi).
\end{equation}
\begin{theorem}\label{thm:sturm} \emph{(\cite{DaHe, K-jmp})}~

The following statements hold
	\begin{enumerate}
		\item For all $\xi\in\R$, $\gamma\mapsto \lambda(\gamma,\xi)$ is increasing.
		\item For all $\gamma\in\R$,  $\displaystyle\lim_{\xi \rightarrow -\infty}\lambda(\gamma,\xi)=1$ and $\displaystyle\lim_{\xi \rightarrow +\infty}\lambda(\gamma,\xi)=+\infty$.
		\item  For all $\gamma\in\R$, the function $\xi \mapsto \lambda(\gamma,\xi)$ admits a unique minimum attained at 	\begin{equation}\label{eq:psi-gamma}
		\xi(\gamma):=-\sqrt{\Theta(\gamma)+\gamma^2}.
		\end{equation}
		Furthermore, this minimum is non-degenerate, $\partial^2_\xi\lambda(\gamma,\xi(\gamma))>0$.
		\item For all $\gamma\in\R$, $-\gamma^2\leq\Theta(\gamma)<1$.
	\end{enumerate}
\end{theorem}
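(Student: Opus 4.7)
The plan is to deduce all four statements from the derivative formulas \eqref{eq:der_gamma}--\eqref{eq:der_gamma1}, the smooth dependence \eqref{eq:lambda-anal}, and the strict positivity $u_{\gamma,\xi}(0)>0$ of the ground state. Statement (1) is immediate from \eqref{eq:der_gamma1}. For the limits in (2), at $+\infty$ the pointwise estimate $(\tau+\xi)^2\geq \xi^2$ on $\R_+$ combined with a standard trace inequality to absorb the Robin boundary term forces $\lambda(\gamma,\xi)\to +\infty$. At $-\infty$ I would use the translated Gaussian $\phi_\xi(\tau)=e^{-(\tau+\xi)^2/2}$, which is the full-line ground state at eigenvalue $1$ and satisfies $\phi_\xi'(0)=-\xi\phi_\xi(0)$. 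A single integration by parts gives
\[
q[\gamma,\xi](\phi_\xi)=\|\phi_\xi\|_{L^2(\R_+)}^2+(\xi+\gamma)|\phi_\xi(0)|^2,
\]
yielding $\limsup_{\xi\to -\infty}\lambda(\gamma,\xi)\leq 1$ because $|\phi_\xi(0)|^2=e^{-\xi^2}\to 0$. The matching lower bound comes from extending any test function on $\R_+$ by zero to $\R$ and comparing with the shifted full-line harmonic oscillator, with the boundary term absorbed by the same trace inequality.

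The same Gaussian computation, specialized to any $\xi<-\gamma$, produces a Rayleigh quotient strictly less than $1$, which gives the upper bound $\Theta(\gamma)<1$ in (4). I would establish this before tackling (3), because the latter requires knowing that the infimum is attained and not only approached in the limit $\xi\to -\infty$.

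For (3), identity \eqref{eq:der_gamma} together with $u_{\gamma,\xi}(0)>0$ forces every critical point $\xi_0$ of $\lambda(\gamma,\cdot)$ to satisfy $\lambda(\gamma,\xi_0)=\xi_0^2-\gamma^2$. Differentiating \eqref{eq:der_gamma} once more and evaluating at such $\xi_0$ kills both the $\partial_\xi\lambda$ factor and the $\lambda-\xi^2+\gamma^2$ factor, leaving the tidy formula
\[
\partial_\xi^2\lambda(\gamma,\xi_0)=-2\xi_0\,|u_{\gamma,\xi_0}(0)|^2.
\]
Hence critical points in $(-\infty,0)$ are strict local minima and those in $(0,\infty)$ are strict local maxima. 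Combined with (2) and $\Theta(\gamma)<1$, a global minimum exists, and an elementary topological argument—between any two local minima in $(-\infty,0)$ there must lie a local maximum, also in $(-\infty,0)$, which is excluded—then forces uniqueness. Writing $\Theta(\gamma)=\xi(\gamma)^2-\gamma^2$ yields $\xi(\gamma)=-\sqrt{\Theta(\gamma)+\gamma^2}$, and the remaining lower bound $\Theta(\gamma)\geq-\gamma^2$ in (4) is automatic.

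The main delicate point I anticipate is the interdependence between (3) and (4): locating the minimum presumes that it is attained, which itself needs the strict upper bound $\Theta(\gamma)<1$. My plan addresses this by proving that upper bound first via the Gaussian trial function, independently of (3), and only then invoking the derivative identities to pin down the structure of the minimum.
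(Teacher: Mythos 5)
The paper does not actually prove Theorem~\ref{thm:sturm}; it simply cites \cite{DaHe,K-jmp}, so there is no in-paper argument to compare against. Your reconstruction nonetheless follows the spirit of those references (Dauge--Helffer/Kachmar), in that it is driven by the first-order Feynman--Hellmann type identities \eqref{eq:der_gamma}--\eqref{eq:der_gamma1}, and the logic for items (1), (3) and (4) is sound: (1) follows from \eqref{eq:der_gamma1} and $u_{\gamma,\xi}(0)>0$; the critical-point relation $\lambda(\gamma,\xi_0)=\xi_0^2-\gamma^2$ and the second-derivative formula $\partial_\xi^2\lambda(\gamma,\xi_0)=-2\xi_0\,|u_{\gamma,\xi_0}(0)|^2$ are correct and do force all critical points of $\lambda(\gamma,\cdot)$ into $\R_-$ and to be non-degenerate minima; uniqueness then follows by the Rolle-type argument; $\Theta(\gamma)<1$ via the shifted Gaussian, and $\Theta(\gamma)\ge-\gamma^2$ from $\xi(\gamma)^2=\Theta(\gamma)+\gamma^2\ge 0$, are both fine. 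Your strategy of proving $\Theta(\gamma)<1$ before the structure of the minimum, to guarantee the infimum is attained, is the right order of operations.

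The one step that would not survive scrutiny as written is the lower bound $\liminf_{\xi\to-\infty}\lambda(\gamma,\xi)\ge 1$ in (2). You propose ``extending any test function on $\R_+$ by zero to $\R$ and comparing with the shifted full-line harmonic oscillator.'' A generic $u\in B^1(\R_+)$ with $u(0)\neq 0$ does \emph{not} extend by zero to an $H^1(\R)$ function, so the comparison with the full-line operator is not available directly, and the trace inequality on its own only yields a bound of the form $\lambda(\gamma,\xi)\ge (1-|\gamma|\varepsilon)\lambda^{\mathrm D}(\xi)-|\gamma|C_\varepsilon$, which does not converge to $1$. The standard repair is an IMS partition of unity $\chi_1^2+\chi_2^2=1$ with $\chi_1$ supported in $[0,-\xi/2]$ and $\chi_2$ supported in $[-\xi/4,\infty)$: on the support of $\chi_1$ the potential satisfies $(\tau+\xi)^2\ge\xi^2/4$, so the boundary term is absorbed and this localized form is $\ge\|\chi_1 u\|^2$ once $|\xi|$ is large; on the support of $\chi_2$, the function $\chi_2 u$ vanishes at $\tau=0$, \emph{can} be extended by zero, and the full-line min--max gives the lower bound $1$; the commutator errors are $O(\xi^{-2})$. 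With that modification your outline becomes a complete proof. A second, smaller point worth recording explicitly: your argument rules out critical points at $\xi_0>0$, but you should also observe that $\xi_0=0$ is impossible (otherwise the non-degeneracy statement would fail), which follows because $\xi_0=0$ would force $\Theta(\gamma)=-\gamma^2$ while you have already shown $\Theta(\gamma)<1$ is attained by a genuine minimum at a point where $\partial_\xi^2\lambda>0$, i.e.\ $\xi_0<0$; alternatively one can note that any local minimum must be at a negative critical point, and between the (unique, by your Rolle argument) negative minimum and $+\infty$ there can be no further critical point, so $\xi_0=0$ never occurs.
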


{\it The Neumann realization.} The particular case where $\gamma=0$ corresponds to the Neumann realization of the operator $H[0,\xi]$, denoted by $H^N[\xi]$, with the associated quadratic form $q^N[\xi]=q[0,\xi]$. The first eigenvalue of $H^N[\xi]$ is denoted by
\begin{equation}\label{eq:mu_n}
\lambda^N(\xi)=\inf {\rm sp}\big(H^N[\xi]\big)=\lambda(0,\xi),
\end{equation}
with the corresponding positive $L^2$-normalized eigenfunction $u^N_\xi:=u_{0,\xi}$. 

By a symmetry argument \cite{Assaad2019,hislop2016band}, we get that the \emph{step} constant $\beta_{-1}$ (in~\eqref{eq:beta}) satisfies
\begin{equation}\label{eq:theta_0}
\Theta_0:=\beta_{-1}=\Theta(0).
\end{equation}
This universal value $\Theta_0$ is often named the \emph{de\,Gennes} constant in the literature \cite{FHP, fournais2010spectral} and satisfies $\Theta_0\in(\frac12,1)$. Numerically (see~\cite{Bon}), one finds $\Theta_0\sim 0.59$.  Note that the non-degenerate minimum $\xi_0:=\xi(0)$ of $\mu^N(\cdot)$ satisfies $\xi_0=-\sqrt{\Theta_0}$.
\section{The step model on the line}\label{sec:ha}
We analyze the band function $\mu_a(\cdot)$ introduced in \eqref{mu_a_1} along with the \emph{positive} normalized ground state $\varphi_{a,\xi}$. 

Note that we are focusing on the interesting situation where $a\in(-1,0)$. As mentioned earlier, for $a\in(0,1)$, the minimum of $\mu_a(\cdot)$ is not achieved and the step constant $\beta_a=a$ \cite{Assaad2019,hislop2016band}; while for $a=-1$,  the case reduces to the de\,Gennes model and $\beta_{-1}=\Theta_0$.

\subsection{Preliminaries}

Left with the situation $a\in(-1,0)$, it is known that a minimum $\zeta_a$ exists and must be negative, $\zeta_a<0$ \cite[Prop.~A.7]{Assaad2019}; our Theorem~\ref{thm:main} sharpens this by establishing that the minimum is unique and non-degenerate. To prove this, new comparison estimates of the step constant $\beta_a$ are needed which improve the existing estimates in the literature \cite{ Assaad2019,hislop2016band}.

The existence of a minimum is due to the behavior at infinity of the band function $\mu_{a}(\cdot)$,   namely,
\begin{equation*}\label{eq:lim_a}
\lim_{\xi\rightarrow-\infty}\mu_a(\xi)= |a|  \quad {\rm and}\quad \lim_{\xi\rightarrow+\infty}\mu_a(\xi)=+\infty,
\end{equation*}
and  the following estimates on the step constant,
\begin{equation}\label{eq:theta4}
|a|\Theta_0<\beta_a<|a|.
\end{equation}
Note that the lower bound  \eqref{eq:theta4} results from a simple comparison arguments using the min-max principle (see \cite[Prop.~A.6]{Assaad2019}); the upper bound is more tricky and relies on the construction of a trial state related to the Robin model introduced in Section~\ref{sec:robin} (see e.g. \cite[Thm.~2.6]{Assaad2019}).
Finally, we recall the expression for the derivative of $\mu_a(\cdot)$ established in \cite{hislop2015edge} (see also~\cite[Prop. A.4]{Assaad2019}). 
\begin{equation}\label{eq:mu_deriv}
\mu_a'(\xi)=\Big(1-\frac1a\Big)\Big ({\varphi'_{a,\xi}(0)}^2+\big(\mu_a(\xi)-\xi^2\big){\varphi_{a,\xi}(0)}^2\Big).
\end{equation}

\subsection{Comparison with the de\,Gennes constant}

\begin{proposition}\label{prop:beta-theta}
	Let $a\in(-1,0)$. For $\beta_a$ and $\Theta_0$ as in~\eqref{eq:beta} and~\eqref{eq:theta_0} respectively, we have
	\[\beta_a<\Theta_0.\]
\end{proposition}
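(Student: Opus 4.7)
The plan is to exhibit an explicit $B^1(\R)$ trial state for $\mathfrak h_a[\xi_0]$ whose Rayleigh quotient is strictly less than $\Theta_0$, where $\xi_0 := -\sqrt{\Theta_0}$ is the unique minimizer of $\lambda^N(\cdot)$ (Theorem~\ref{thm:sturm} applied at $\gamma=0$). The natural candidate is obtained by reflecting and rescaling the Neumann ground state $u^N_{\xi_0}$ across the origin; the key idea is to pick the dilation factor so that, after a change of variable on $\R_-$, the potential $(\xi_0+a\tau)^2$ is matched exactly with $(\xi_0+\tau)^2$ on $\R_+$.

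Concretely, I would set
\[
\psi(\tau) := \begin{cases} u^N_{\xi_0}(\tau), & \tau \geq 0, \\ u^N_{\xi_0}(-|a|\tau), & \tau < 0, \end{cases}
\]
and verify $\psi\in B^1(\R)$: continuity at $0$ is automatic, while $C^1$-matching at $\tau=0$ uses the Neumann condition $(u^N_{\xi_0})'(0)=0$ to make both one-sided derivatives vanish. Under the substitution $s=-|a|\tau$ on $\R_-$ one has $\xi_0 + a\tau = \xi_0 + s$ (since $a=-|a|$), so the negative-axis contributions to $\|\psi\|^2$, $\int|\psi'|^2$, and $\int V_a(\xi_0,\tau)|\psi|^2\,d\tau$ reduce to $1/|a|$, $|a|$, and $1/|a|$ times the corresponding integrals of $u^N_{\xi_0}$ on $\R_+$, respectively.

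Collecting these computations and using the elementary identity $\int_0^\infty (\xi_0+\tau)^2 (u^N_{\xi_0})^2\,d\tau = \Theta_0 - \|(u^N_{\xi_0})'\|^2$ (since $\lambda^N(\xi_0)=\Theta_0$ and $\|u^N_{\xi_0}\|=1$), the Rayleigh quotient collapses to
\[
\frac{q_a[\xi_0](\psi)}{\|\psi\|^2} \;=\; |a|\,\|(u^N_{\xi_0})'\|^2 + \bigl(\Theta_0 - \|(u^N_{\xi_0})'\|^2\bigr) \;=\; \Theta_0 - (1-|a|)\,\|(u^N_{\xi_0})'\|^2.
\]
Because $u^N_{\xi_0}$ is a non-constant ground state we have $\|(u^N_{\xi_0})'\|^2 > 0$, and $1-|a|>0$ for $a\in(-1,0)$, so the min-max characterization \eqref{mu_a_1} gives $\beta_a \leq \mu_a(\xi_0) < \Theta_0$.

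The main obstacle is guessing the correct dilation factor $c=|a|$ in the definition of $\psi$: with any other choice, the cross terms in the expansion of the Rayleigh quotient fail to cancel and the bound ends up depending on moments of $u^N_{\xi_0}$ that are not explicitly controlled. Once $c=|a|$ is identified everything else is routine, and the expression $\Theta_0-(1-|a|)\|(u^N_{\xi_0})'\|^2$ has the satisfying feature that it degenerates to equality precisely at $|a|=1$, consistent with the known fact $\beta_{-1}=\Theta_0$.
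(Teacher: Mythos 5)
Your proof is correct and takes a genuinely different route from the paper's, even though both arguments glue together rescaled de\,Gennes ground states across the origin. The paper's dilation factor is $\sqrt{|a|}$ rather than your $|a|$: on $\R_-$ they use (up to the continuity constant) the ground state at the shifted momentum $\xi/\sqrt{|a|}$, so the negative-axis contribution to the quadratic form becomes $|a|\lambda^N(\xi/\sqrt{|a|})$ times the local mass. Taking $\xi=\xi_0$, they then still have to prove that $f(x):=x\lambda^N(\xi_0/\sqrt{x})-\Theta_0<0$ on $(\Theta_0,1)$, which they do via the Feynman--Hellmann identity~\eqref{eq:der_gamma} and the monotonicity of $\lambda^N(\cdot)$ to the left of its minimizer, after first disposing of the range $a\in[-\Theta_0,0)$ by the already-known bound $\beta_a<|a|$. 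Your dilation by $|a|$ keeps the de\,Gennes parameter pinned at $\xi_0$, so the Rayleigh quotient closes to the explicit value $\Theta_0-(1-|a|)\|(u^N_{\xi_0})'\|^2_{L^2(\R_+)}$, whose strict negativity of the correction is immediate from $\|(u^N_{\xi_0})'\|>0$ and $|a|<1$. This removes the Feynman--Hellmann input, the monotonicity input, and the case split, and covers all of $a\in(-1,0)$ in one stroke; the formula also degenerates to equality precisely at $|a|=1$, matching $\beta_{-1}=\Theta_0$. One small remark: the $C^1$ matching at $\tau=0$ you highlight is a pleasant byproduct of the Neumann condition, but it is not actually needed for $\psi\in B^1(\R)$ --- continuity alone gives $H^1_{\mathrm{loc}}$ --- so the argument would survive any continuous gluing, though it would not collapse so cleanly.
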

\begin{proof}
If $a\in[-\Theta_0,0)$, then \eqref{eq:theta4} yields that $\beta_a<\Theta_0$ and the conclusion of 	Proposition~\ref{prop:beta-theta} follows in this particular case.

In the sequel, we fix $a\in(-1,\Theta_0)$. For all $\xi\in\R$, we denote by $u(\cdot;\xi)=u^N_\xi(\cdot)$ the positive ground state of the de\,Gennes model (corresponding to the eigenvalue $\lambda^N(\xi)$ in \eqref{eq:mu_n}. We introduce the  function $g_\xi$ on $\R$ as follows:
	\begin{equation}\label{eq:g}
	g_\xi(\tau)= 
	\begin{cases}
u(\tau;\xi),&\mathrm{if}~t\geq 0,\\
	c u(\tau;\xi/\sqrt{|a|}),&\mathrm{if}~t<0,
	\end{cases}
	\end{equation}
	with $c=c_\xi:=u(0;\xi)/u(0; \xi/\sqrt{|a|})>0$  so that $g_\xi(0^-)=g_\xi(0^+)$. We observe that $g_\xi$ is in the form domain of the operator $\mathfrak h_a[\xi]$. 
	Performing an elementary scaling argument, we get
	\begin{align*}
q_a[\xi](g_\xi)&= \lambda^N(\xi)\int_{\R_+}|g_\xi(t)|^2\,dt+|a|\lambda^N\Big(\frac {\xi}{\sqrt{|a|}}\Big)\int_{\R_-}|g_\xi(t)|^2\,dt\\
&=\lambda^N(\xi)\int_{\R}|g_\xi(t)|^2\,dt+\left(|a|\lambda^N\Big(\frac {\xi}{\sqrt{|a|}}\Big)-\lambda^N(\xi)\right)\int_{\R_-}|g_\xi(t)|^2\,dt.
	\end{align*}
We choose now $\xi=\xi_0:=-\sqrt{\Theta_0}$ corresponding to $\Theta_0$ in~\eqref{eq:theta_0}. That way, we get $\lambda^N(\xi_0)=\Theta_0$ and
\[q_a[\xi_0](g_{\xi_0})=\Theta_0\int_{\R}|g_{\xi_0}(\tau)|^2\,dt+f(|a|)\int_{\R_-}|g_{\xi_0}(\tau)|^2\,d\tau,\]
where $f(x):=x\lambda^N\big(\frac {\xi_0}{\sqrt{x}}\big)-\Theta_0$,	for $x\in(\Theta_0,1)$. By the min-max principle
\[\beta_a\leq \frac {q_a[\xi_0](g_{\xi_0})}{\|g_{\xi_0}\|^2_{L^2{(\R)}}}\leq \Theta_0+f(|a|)\frac {\int_{\R_-}|g_{\xi_0}(\tau)|^2\,d\tau}{\int_{\R}|g_{\xi_0}(\tau)|^2\,d\tau}.\]
To get that $\beta_a<\Theta_0$,  it suffices to prove that $f(x)<0$, for $x\in(\Theta_0,1)$.

Let $x\in(\Theta_0,1)$ and $\alpha=\frac{\xi_0}{\sqrt{x}}\in(-1,\xi_0)$. By \eqref{eq:der_gamma} (applied for $j=1$ and $\gamma=0$), we can write
\[
f(x)
=x\big(\lambda^N(\alpha)-\alpha^2\big)=
x\frac{(\lambda^N)'(\alpha)}{|u^N_\alpha(0)|^2}.
\]
Since $\alpha\in(-1,\xi_0)$ and $\lambda^N(\cdot)$ is monotone decreasing on the interval $(-1,\xi_0)$, we deduce that $(\lambda^N)'(\alpha)<0$ and eventually $f(x)<0$ as required.
\end{proof}

\subsection{Variation of the ground state near zero}

We pick any $\zeta_a\in\mu_a^{-1}(\beta_a)$ so that $\beta_a=\mu_a(\zeta_a)$, and denote by $\phi_a=\varphi_{a,\zeta_a}$ the positive normalized ground state for $\beta_a$ (so we are suppressing the dependence of the ground state  on $\zeta_a$). We determine the sign of the derivative of $\phi_a$ at the origin, thereby yielding that the ground state is a decreasing function in a neighborhood of $0$. This result will be crucial in deriving the sign of some \emph{moments} in Section~\ref{sec:moment} later.
\begin{proposition}\label{prop:sign-der}
For all $a\in (-1,0)$ and $\zeta_a\in\mu_a^{-1}(\beta_a)$,  the positive normalized ground state $\phi_a=\varphi_{a,\zeta_a}$ satisfies $\phi'_a(0)<0$.
\end{proposition}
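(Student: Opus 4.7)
The plan is to reduce to the Robin model of Section~\ref{sec:robin} by restricting $\phi_a$ to the positive half-line, and then exploit the strict comparison $\beta_a<\Theta_0$ just established in Proposition~\ref{prop:beta-theta}. Since $\phi_a>0$ on $\R$, we have $\phi_a(0)>0$ in particular, and so the Robin parameter
\[
\gamma_a:=\frac{\phi_a'(0)}{\phi_a(0)}\in\R
\]
is well defined. By construction, $\phi_a|_{\R_+}$ lies in $B^1(\R_+)$, satisfies the Robin condition $u'(0)=\gamma_a u(0)$, and solves the eigenvalue equation $-u''+(\tau+\zeta_a)^2 u=\beta_a u$ on $\R_+$. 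A routine Sturm--Liouville/Perron--Frobenius argument (of the same flavor as the one recalled just before~\eqref{eq:u-gj>0}) identifies any such \emph{positive} solution as the ground state of $H[\gamma_a,\zeta_a]$, so that
\[
\beta_a=\lambda(\gamma_a,\zeta_a)\geq \Theta(\gamma_a).
\]

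Next, I would record the strict monotonicity of $\gamma\mapsto\Theta(\gamma)$. For each fixed $\xi$, \eqref{eq:der_gamma1} together with the non-vanishing $u^1_{\gamma,\xi}(0)\neq 0$ shows that $\gamma\mapsto\lambda(\gamma,\xi)$ is strictly increasing; hence, for $\gamma_1<\gamma_2$, picking $\xi=\xi(\gamma_2)$ from Theorem~\ref{thm:sturm}(3),
\[
\Theta(\gamma_1)\leq\lambda(\gamma_1,\xi(\gamma_2))<\lambda(\gamma_2,\xi(\gamma_2))=\Theta(\gamma_2).
\]

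Combining these two ingredients with Proposition~\ref{prop:beta-theta} gives
\[
\Theta(\gamma_a)\leq \beta_a<\Theta_0=\Theta(0),
\]
and strict monotonicity of $\Theta$ forces $\gamma_a<0$. Since $\phi_a(0)>0$, this translates into $\phi_a'(0)<0$, as required.

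The decisive step is the comparison $\beta_a<\Theta_0$ from Proposition~\ref{prop:beta-theta}; the reduction to the Robin model and the monotonicity of $\Theta$ are standard. Without the strict inequality, the same argument would yield only $\phi_a'(0)\leq 0$, and one could not rule out the degenerate Neumann scenario $\gamma_a=0$ which actually occurs in the limiting case $a=-1$. The only mildly delicate point beyond that is the identification of $\phi_a|_{\R_+}$ as the \emph{ground state} (rather than just an eigenfunction) of $H[\gamma_a,\zeta_a]$, which one should state carefully in order to legitimately invoke the monotonicity statements attached to the lowest band $\lambda$ and to $\Theta$.
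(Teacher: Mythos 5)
Your proof is correct and essentially the same as the paper's: both reduce to the Robin model via $\gamma_a=\phi_a'(0)/\phi_a(0)$ (so that $\phi_a|_{\R_+}$ is an eigenfunction of $H[\gamma_a,\zeta_a]$, giving $\beta_a\geq\lambda(\gamma_a,\zeta_a)$) and then exploit the strict comparison $\beta_a<\Theta_0$ from Proposition~\ref{prop:beta-theta} to force $\gamma_a<0$. The only minor stylistic difference is that you route through the strict monotonicity of $\gamma\mapsto\Theta(\gamma)$ (which you rederive from~\eqref{eq:der_gamma1}), whereas the paper keeps $\xi=\zeta_a$ fixed and applies the monotonicity of $\gamma\mapsto\lambda(\gamma,\zeta_a)$ from Theorem~\ref{thm:sturm}(1) directly, arguing by contradiction from $\gamma_a\geq 0$.
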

\begin{proof}
The proof relies on a comparison procedure with the Robin model.  Let $\gamma_a=\phi_a'(0)/\phi_a(0)$. Since the ground state $\phi_a$ is positive,  it suffices to prove that $\gamma_a<0$. The eigenvalue equation $\mathfrak h_a[\zeta_a]\phi_a=\beta_a\phi_a$ written on $\R_+$ is
 \begin{equation}\label{eq:ode1} 
 \begin{cases}
-\phi_a''(\tau)+(\tau+\zeta_a)^2\phi_a(\tau)=\beta_a \phi_a(\tau),&\quad t> 0,\\
\phi_a'(0)=\gamma_a \phi_a(0),&
 \end{cases}
 \end{equation}
Consequently, $\phi_a$ is an eigenfunction of the Robin operator $H[\gamma_a,\zeta_a]$, defined in~\eqref{eq:domRob}, with a corresponding eigenvalue $\beta_a$. Using the min-max principle, we have
 \begin{equation}\label{eq:beta-lamda}
\beta_a\geq
\lambda(\gamma_a,\zeta_a)
 \end{equation}
  where $\lambda(\gamma_a,\zeta_a)$ is defined in~\eqref{eq:mu1}. 
   
  If $\gamma_a\geq0$, then by~Theorem~~\ref{thm:sturm}, Proposition~\ref{prop:beta-theta} and~\eqref{eq:theta_0}, we get
  \[\lambda(\gamma_a,\zeta_a)\geq \lambda(0,\zeta_a)=\lambda^N(\zeta_a)\geq\Theta_0>\beta_a,\]
  thereby contradicting~\eqref{eq:beta-lamda}. This proves that $\gamma_a<0$.  
\end{proof}

\subsection{Uniqueness and non-degeneracy of the minimum}\label{sec:uniq}

Now, we establish that the minimum of $\mu_a(\cdot)$ is unique and non-degenerate. The key in our proof is a tricky connection with the Robin model. 
\begin{proposition}\label{prop:uniq}
	For all  $a\in(-1,0)$, 
\[\exists\,\zeta_a<0,~\mu_a^{-1}(\beta_a)=\{\zeta_a\}~\&~\mu_a''(\zeta_a)>0,\]
where $\mu_a(\cdot)$ and  $\beta_a$ are the eigenvalues introduced in \eqref{mu_a_1} and ~\eqref{eq:beta} respectively. 
\end{proposition}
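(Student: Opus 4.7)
The main idea is to port the analysis through the Robin-model correspondence already used in Proposition~\ref{prop:sign-der}. Given any $\zeta_a\in\mu_a^{-1}(\beta_a)$, I set $\phi_a:=\varphi_{a,\zeta_a}$ and $\gamma_a:=\phi_a'(0)/\phi_a(0)$; by Proposition~\ref{prop:sign-der} we know $\gamma_a<0$. Since $\phi_a$ is positive on $\R_+$ and satisfies the Robin boundary condition with parameter $\gamma_a$ together with the eigenvalue equation of $H[\gamma_a,\zeta_a]$, it is the positive Robin ground state, so $\lambda(\gamma_a,\zeta_a)=\beta_a$.

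The first key step is to show that $\zeta_a$ must coincide with the Robin minimizer $\xi(\gamma_a)$ of Theorem~\ref{thm:sturm}(3). Combining $\mu_a'(\zeta_a)=0$ with \eqref{eq:mu_deriv} and $\phi_a(0)>0$ yields the algebraic identity $\zeta_a^2=\beta_a+\gamma_a^2$. Plugging this into the Robin derivative formula \eqref{eq:der_gamma} gives $\partial_\xi\lambda(\gamma_a,\zeta_a)=0$, and Theorem~\ref{thm:sturm}(3) then forces $\zeta_a=\xi(\gamma_a)$, hence $\beta_a=\Theta(\gamma_a)$. This passage from the step minimizer to the Robin minimizer is the crux of the argument and the step I expect to be the main obstacle: one has to recognize that the boundary data of the step ground state encode a Robin problem whose variational structure is already fully understood.

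Uniqueness then follows from the strict monotonicity of $\gamma\mapsto\Theta(\gamma)$, which is immediate from $\partial_\gamma\lambda(\gamma,\xi)=|u_{\gamma,\xi}(0)|^2>0$ in \eqref{eq:der_gamma1}: for $\gamma_1<\gamma_2$, $\Theta(\gamma_1)\leq\lambda(\gamma_1,\xi(\gamma_2))<\lambda(\gamma_2,\xi(\gamma_2))=\Theta(\gamma_2)$. If $\zeta_a$ and $\tilde\zeta_a$ are two minima with corresponding slopes $\gamma_a,\tilde\gamma_a$, the preceding step gives $\Theta(\gamma_a)=\Theta(\tilde\gamma_a)=\beta_a$, so $\gamma_a=\tilde\gamma_a$ and thus $\zeta_a=\xi(\gamma_a)=\xi(\tilde\gamma_a)=\tilde\zeta_a$.

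For non-degeneracy, the plan is to differentiate \eqref{eq:mu_deriv} once more. Writing $\gamma(\xi):=\varphi_{a,\xi}'(0)/\varphi_{a,\xi}(0)$ and $v(\xi):=\varphi_{a,\xi}(0)$, both smooth by \eqref{eq:anal_a}, the identity $\mu_a(\xi)=\lambda(\gamma(\xi),\xi)$ differentiated at $\zeta_a$ — combined with $\partial_\xi\lambda(\gamma_a,\zeta_a)=0$, $\partial_\gamma\lambda(\gamma_a,\zeta_a)>0$ and $\mu_a'(\zeta_a)=0$ — forces $\gamma'(\zeta_a)=0$. Writing $G(\xi):=\gamma(\xi)^2+\mu_a(\xi)-\xi^2$, formula \eqref{eq:mu_deriv} reads $\mu_a'(\xi)=(1-1/a)\,v(\xi)^2\,G(\xi)$; differentiating at $\zeta_a$ kills the $2vv'G$ term since $G(\zeta_a)=0$, and $G'(\zeta_a)=2\gamma_a\gamma'(\zeta_a)+\mu_a'(\zeta_a)-2\zeta_a$ collapses to $-2\zeta_a$. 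Since $1-1/a>0$, $v(\zeta_a)>0$ and $\zeta_a<0$, this gives
\[
\mu_a''(\zeta_a)=-2\Bigl(1-\tfrac{1}{a}\Bigr)v(\zeta_a)^2\,\zeta_a>0,
\]
completing the proof.
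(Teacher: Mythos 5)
Your proof is correct, and the uniqueness argument takes a genuinely different route from the paper. The paper does not use Theorem~\ref{thm:sturm}(3) for uniqueness; it instead shows directly that \emph{every} negative critical point $\eta$ of $\mu_a$ satisfies $\gamma'(\eta)=0$ and $\mu_a''(\eta)=2(\tfrac1a-1)\eta\varphi_{\eta,a}(0)^2>0$, so every negative critical point is a non-degenerate local minimum, and since $\mu_a^{-1}(\beta_a)\subset\R_-$, two distinct global minima would produce an intermediate negative local maximum, which is impossible. You instead push the minimizer through the map $\zeta_a\mapsto\gamma_a$, show $\Theta(\gamma_a)=\beta_a$, and use the strict monotonicity of $\Theta(\cdot)$ from \eqref{eq:der_gamma1}. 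Both routes work; the paper's is self-contained and only uses the derivative formulas \eqref{eq:der_gamma}--\eqref{eq:der_gamma1}, while yours exposes a cleaner correspondence between the step minimizer and the Robin de\,Gennes constant, and also observes (which the paper does not) that $\mu_a(\xi)=\lambda^1(\gamma(\xi),\xi)$ globally, since the restriction of the positive ground state to $\R_+$ is already the Robin ground state, avoiding the paper's more cautious argument with an unspecified index $j$ and a local continuity step.

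One point to make explicit: in the step ``Theorem~\ref{thm:sturm}(3) forces $\zeta_a=\xi(\gamma_a)$'' you pass from $\partial_\xi\lambda(\gamma_a,\zeta_a)=0$ to $\zeta_a=\xi(\gamma_a)$. Theorem~\ref{thm:sturm}(3) as stated asserts a \emph{unique minimum}, not a unique critical point; to rule out $\zeta_a$ being some other critical point (local maximum or saddle of $\lambda(\gamma_a,\cdot)$) you are implicitly using that $\lambda(\gamma_a,\cdot)$ is strictly decreasing on $(-\infty,\xi(\gamma_a))$ and strictly increasing on $(\xi(\gamma_a),+\infty)$, i.e.\ that the unique minimum is the only critical point. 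This is indeed proved in \cite{DaHe, K-jmp} but is not literally contained in the statement quoted here, so you should either cite that fact or fall back on the paper's route: your own computation $\mu_a''(\zeta_a)=-2(1-\tfrac1a)v(\zeta_a)^2\zeta_a>0$ is valid at \emph{any} negative critical point $\zeta_a$ (it only uses $\mu_a'(\zeta_a)=0$, $\zeta_a<0$, and the positivity of the ground state, not Proposition~\ref{prop:sign-der}), and this non-degeneracy of all negative critical points already gives uniqueness without invoking $\Theta$ at all. The non-degeneracy computation itself is essentially identical to the paper's.
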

\begin{proof}
	First, note that $\mu_a^{-1}(\beta_a)\subset\R_-$ and is non-empty, by \cite[Proposition~A.7]{Assaad2019}. Hence, it suffices to prove that any negative critical point  is a non-degenerate local minimum.
	
	Let $\eta<0$ be a critical point of $\mu_a(\cdot)$ (i.e. $\mu_a'(\eta)=0$). 
	For all $\xi\in\R$, we introduce 
	\begin{equation}\label{eq:gm}
	\gamma(\xi)=\gamma_a(\xi):=\varphi_{\xi,a}'(0)/\varphi_{\xi,a}(0),
	\end{equation}
	where $\varphi_{\xi,a}$ is the \emph{positive} normalized ground state of  the operator $\mathfrak h_a[\xi]$, which is now an eigenfunction for the Robin problem
	\begin{equation}\label{eq:ode2} 
	\begin{cases}
	-\varphi_{\xi,a}''(\tau)+(\tau+\xi)^2\varphi_{\xi,a}(\tau)=\mu_a(\xi) \varphi_{\xi,a}(\tau),&\quad \tau> 0,\\
	\varphi_{\xi,a}'(0)=\gamma(\xi) \varphi_{\xi,a}(0).&
	\end{cases}
	\end{equation}
Using this for $\xi=\eta$, we can pick $j=j(\eta)\in\mathbb N$ such that $\mu_a(\eta)=\lambda^j(\gamma(\eta),\eta)$,  the $j$th min-max eigenvalue of $H[\gamma(\xi),\xi]$. By the continuity of the involved functions and the simplicity of the eigenvalue $\lambda^j(\gamma(\eta),\eta)$, we can pick $\epsilon=\epsilon(\eta)>0$ such that
\begin{equation}\label{eq:R-ev}
\mbox{for all}\ \xi\in(\eta-\epsilon,\eta+\epsilon),\ \mu_a(\xi)=\lambda^j(\gamma(\xi),\xi).
\end{equation}
Hence, by~\eqref{eq:der_gamma}, \eqref{eq:der_gamma1} and differentiation in \eqref{eq:R-ev} w.r.t. $\xi$ we get
	\begin{align}
	\mu_a'(\xi)&=\partial_\xi \lambda^j(\gamma(\xi),\xi)\nonumber\\
	&=\big(\lambda^j(\gamma(\xi),\xi)-\xi^2+\gamma^2(\xi)\big)|u^j_{\gamma(\xi),\xi}(0)|^2+\gamma'(\xi) |u^j_{\gamma(\xi),\xi}(0)|^2.\label{eq:der-mu}
	\end{align}
	 Since $ \mu_a'(\eta)=0$, we infer from \eqref{eq:mu_deriv} and \eqref{eq:R-ev} that
\begin{equation}\label{eq:der-eta} \lambda^j(\gamma(\eta),\eta)-\eta^2+\gamma(\eta)^2= \mu_a(\eta)-\eta^2+\gamma(\eta)^2 =\frac{\mu_a'(\eta)}{\varphi_{\eta,a}(0)^2}=0.  
\end{equation}
	 Inserting this into \eqref{eq:der-mu} after setting $\xi=\eta$, we get  (thanks to \eqref{eq:u-gj>0})
	\begin{equation}\label{eq:gamma-der} 
	 \gamma'(\eta)=0.
	\end{equation}
	This result will be used in the computation of $\mu_a''(\eta)$ below. In fact,
	differentiation in~\eqref{eq:mu_deriv} w.r.t. $\xi$ yields
	\[\mu_a''(\xi)=\Big(1-\frac 1a\Big )\Big(\big( \mu_a(\xi)-\xi^2+\gamma(\xi)^2\big)\partial_\xi\varphi^2_{\xi,a}(0)+\big( \mu'_a(\xi)-2\xi+2\gamma(\xi)\partial_\xi\gamma(\xi)\big)\varphi^2_{\xi,a}(0)\Big).
	\]
 Considering again $\xi=\eta$, we get
	\[\mu_a''(\eta)=2\Big(\frac 1a-1\Big)\eta\varphi^2_{\eta,a}(0).\]
	In the above equation, we used~\eqref{eq:mu_deriv},~\eqref{eq:der-eta} and~\eqref{eq:gamma-der}.
	Recall that we take $\eta<0$ and $a\in(-1,0)$, hence 
	\[\mu_a''(\eta)>0,\]
	and this holds for any negative critical point, $\eta$, of $\mu_a(\cdot)$.  This finishes the proof.
\end{proof}

\subsection{Proof of the main result}
  Theorem~\ref{thm:main}
now follows by collecting Propositions~\ref{prop:uniq},~\ref{prop:sign-der} and \ref{prop:beta-theta}.

\section{Applications}\label{sec:app}

\subsection{Moments}\label{sec:moment}\
 
Fix  $a\in[-1,0)$ and consider $\beta_a$ as in \eqref{eq:beta}, the ground state $\phi_a$, and $\zeta_a$ the unique minimum of $\mu_a(\cdot)$ (see Theorem~\ref{thm:main} and Remark~\ref{rem:main}). We can invert the operator
$\mathfrak h_a[\zeta_a]-\beta_a$ on the functions orthogonal to the ground state $\phi_a$, thereby leading to the introduction of the regularized resolvent (see e.g.~\cite[Lemma~3.2.9]{fournais2010spectral}):
\begin{equation}\label{eq:R}
\mathfrak R_a(u)=
\begin{cases}
0&\mathrm{if}~u\parallel\phi_a\\
(\mathfrak h_a[\zeta_a]-\beta_a)^{-1}u &\mathrm{if}~u\perp \phi_a
\end{cases}.
\end{equation}
The construction of certain trial states in Sec.~\ref{sec:1Dw} below requires inverting  $\mathfrak h_a[\zeta_a]-\beta_a$ on functions involving $(\zeta_a+\sigma(\tau)\tau )^n\phi_a(\tau)$, for  positive integers $n$, with $\sigma(\cdot)$ introduced in~\eqref{eq:potential}. We are then lead to investigate the following \emph{moments}
\[M_n(a)=\int_{-\infty}^{+\infty}\frac 1{\sigma(\tau)}(\zeta_a+\sigma(\tau)\tau)^n|\phi_a(\tau)|^2\,d\tau,\]
\begin{proposition}\label{prop:moment}
For $a\in[-1,0)$, we have
	\begin{align}
	M_1(a)&=0,\label{eq:m1}\\
	M_2(a)&=-\frac 12 \beta_a\int_{-\infty}^{+\infty}\frac 1{\sigma(t)}|\phi_a(\tau)|^2\,d\tau+\frac 14\Big(\frac 1a-1\Big)\zeta_a\phi_a(0)\phi_a'(0), \label{eq:m2}\\
	M_3(a)&=\frac 13\Big(\frac 1a-1\Big)\zeta_a\phi_a(0)\phi_a'(0).\label{eq:m3}
	\end{align}
\end{proposition}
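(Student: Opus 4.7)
The strategy is to split each moment as $M_n(a) = I_n^+ + \frac{1}{a}I_n^-$, where
\[
I_n^\pm := \int_{\R_\pm}(\zeta_a+\sigma(\tau)\tau)^n \phi_a(\tau)^2\,d\tau,
\]
and to compute the half-moments $I_n^\pm$ separately via Pohozaev-type integration-by-parts identities derived from the eigenvalue ODE on each half-line. Two algebraic ingredients will drive the argument: (i) the \emph{energy identity} obtained by multiplying the ODE by $\phi_a$ on each half, which converts $\int_{\R_\pm}\phi_a'^2$ into $\beta_a I_0^\pm - I_2^\pm$ up to a boundary term at $0$; and (ii) the \emph{critical-point relation}, which follows from $\mu_a'(\zeta_a) = 0$, equation~\eqref{eq:mu_deriv}, and the fact that $1 - 1/a \neq 0$:
\begin{equation*}
\phi_a'(0)^2 + (\beta_a - \zeta_a^2)\phi_a(0)^2 = 0.
\end{equation*}
This relation will kill, at each stage, the particular boundary combination generated by the Pohozaev identities.

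For~\eqref{eq:m1} the argument is the simplest: multiplying the ODE on $\R_\pm$ by $\phi_a'$, using $\phi_a''\phi_a' = \tfrac12(\phi_a'^2)'$ and $\phi_a\phi_a' = \tfrac12(\phi_a^2)'$, and integrating by parts produces expressions for $I_1^+$ and $a I_1^-$ each proportional to $\phi_a'(0)^2 - (\zeta_a^2-\beta_a)\phi_a(0)^2$. By~(ii) both half-integrals vanish individually, so in particular $M_1(a)=0$.

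For~\eqref{eq:m2} and~\eqref{eq:m3} I would repeat the scheme with heavier weighted multipliers, respectively $(\zeta_a+\sigma(\tau)\tau)\phi_a'$ and $(\zeta_a+\sigma(\tau)\tau)^2\phi_a'$. After integration by parts, the resulting identities contain the kinetic integral $\int_{\R_\pm}\phi_a'^2$ (which is eliminated using~(i)), and, in the case of $M_3$, also the auxiliary integral $\int_{\R_\pm}(\zeta_a+\sigma(\tau)\tau)\phi_a'^2$; this is in turn eliminated via a companion identity obtained by testing against $(\zeta_a+\sigma(\tau)\tau)\phi_a$. The net effect on each half-line is a formula of the form $c_\pm \zeta_a^{\,k}\phi_a(0)\phi_a'(0) + d_\pm \zeta_a^{\,k}\phi_a(0)^2 + e_\pm\beta_a I_0^\pm$, once the critical-point combination is discarded via~(ii) and $I_1^\pm = 0$ has been used. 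Summing with weights $1$ and $1/a$, the $\phi_a(0)^2$ contributions and the $\beta_a I_0^\pm$ pieces collapse into the stated compact expressions; the characteristic factor $\tfrac14(\tfrac1a - 1)$ arises precisely from the opposite signs of the $\phi_a(0)\phi_a'(0)$ boundary contributions on the two half-lines.

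The main technical obstacle is the bookkeeping: every Pohozaev step produces many boundary terms of the form $\zeta_a^k\phi_a(0)^2$ and $\zeta_a^{k-1}\phi_a'(0)^2$, which must be carefully regrouped into multiples of $\phi_a'(0)^2 - (\zeta_a^2-\beta_a)\phi_a(0)^2$ in order for~(ii) to kill them. The fact that this cancellation is available at every order, together with the sign mismatch between $\R_+$ and $\R_-$ after weighting by $1$ and $1/a$, is what forces the neat form of the formulas~\eqref{eq:m2}--\eqref{eq:m3}.
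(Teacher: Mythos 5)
Your proposal is correct and is essentially the same Pohozaev-type argument the paper uses; the paper merely packages the multipliers via the Bernoff--Sternberg commutator identity $L(2p\phi_a'-p'\phi_a)=\big(p'''-4((\zeta_a+\sigma\tau)^2-\beta_a)p'-4\sigma(\zeta_a+\sigma\tau)p\big)\phi_a$ with piecewise $p\propto\sigma^{-2}(\zeta_a+\sigma\tau)^k$ and one Green's-identity step, whereas you unpack that same computation into separate half-line tests against $(\zeta_a+\sigma\tau)^k\phi_a'$ and $(\zeta_a+\sigma\tau)^{k'}\phi_a$. Your critical-point relation (ii) is exactly the paper's equation \eqref{eq:m13}, your energy identity (i) and companion identity play the role of the $-p'\phi_a$ part of $v$, and the $\tfrac14(\tfrac1a-1)$ factor arises the same way in both treatments.
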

\begin{rem}\label{rem:moment}~
\begin{enumerate}
\item (Feynman-Hellmann)\
We have  (see e.g.  \cite[Eq.~(A.9)]{Assaad2019})
\begin{equation}\label{eq:FH-ms}
(\zeta_a+\sigma(\tau)\tau)\phi_a(\tau)\perp\phi_a(\tau)~{\rm  in~}L^2(\R).
\end{equation}  
Furthermore, since $M_1(a)=0$, we get further that $\frac1{\sigma(\tau)}(\zeta_a+\sigma(\tau)\tau)\phi_a\perp\phi_a$ too. Combined together, we see that
\[ (\zeta_a+a\tau)\phi_a\perp\phi_a~{\rm in~}L^2(\R_-)~\&~(\zeta_a+\tau)\phi_a\perp\phi_a~{\rm in~}L^2(\R_+)\]
which is consistent with \eqref{eq:ode1}, since by \eqref{eq:mu_deriv} and \eqref{eq:der_gamma}, $\zeta_a$ is a critical point of the corresponding Robin band function $\lambda^j(\gamma_a,\cdot)$. 
\item
As a consequence of Theorem~\ref{thm:main}, $M_3(a)=0$ for $a=-1$, and it is negative for $-1<a<0$, which  is consistent with \cite{BS98}.
\end{enumerate}
\end{rem}
\begin{proof}
	In an analogous manner to \cite{BS98}, we define the operator
	\[L=\mathfrak h_a[\zeta_a]-\beta_a=-\frac{d^2}{d\tau^2}+(\zeta_a+\sigma(\tau)\tau)^2-\beta_a.\]
	Pick an arbitrary smooth function on $\R\setminus\{0\}$ and set $v=2p\phi_a'-p'\phi_a$. We check that
	\begin{equation}\label{eq:m11}
Lv=\left(p^{(3)}-4\big((\zeta_a+\sigma \tau)^2-\beta_a\big) p'-4\sigma(\zeta_a+\sigma \tau)p\right)\phi_a.
	\end{equation}
	Noting that $L\phi_a=0$, we obtain by an integration by parts,
	\begin{align}\label{eq:m12}
\int_{-\infty}^{+\infty}\phi_aLv\,d\tau&=\int_{-\infty}^{+\infty}vL\phi_a\,d\tau-\phi_a(0)v'(0^-)+\phi_a(0)v'(0^+)+\phi'_a(0)v(0^-)-\phi_a'(0)v(0^+)\nonumber\\
&=-\phi_a(0)v'(0^-)+\phi_a(0)v'(0^+)+\phi'_a(0)v(0^-)-\phi_a'(0)v(0^+).
	\end{align}
 Take $p=1/\sigma^2$, then a simple computation, using~\eqref{eq:m11} and~\eqref{eq:m12}, yields
	\[M_1(a)=\frac 12\Big(1-\frac 1{a^2}\Big)\big((\beta_a-\zeta_a^2)\phi_a(0)^2+\phi_a'(0)^2\big).\]
	The definition of $\zeta_a$ ensures that $\mu'(\zeta_a)=0$. Hence, by~\eqref{eq:der-mu}
	\begin{equation}\label{eq:m13}
(\beta_a-\zeta_a^2)\phi_a(0)^2+\phi_a'(0)^2=0.
	\end{equation}
	Consequently, $M_1(a)=0$.

 Now, inserting $p=\frac1{\sigma^2}(\zeta_a+\sigma t)^2$ into \eqref{eq:m11}--\eqref{eq:m13}, we establish~\eqref{eq:m2}.

 A similar computation as above, with the choice $p=\frac{1}{\sigma^2}(\zeta_a+\sigma t)^3$, gives
	\[M_3(a)=\frac 23 \beta_a M_1(a)+\frac 13\Big(\frac 1a-1\Big)\zeta_a\phi_a(0)\phi_a'(0).\]
	Having $M_1(a)=0$, we get~\eqref{eq:m3}.
\end{proof}

\subsection{A model operator in a weighted space}\label{sec:1Dw}\

The operator $\mathfrak h_a[\xi]$ is not sufficient for the understanding of the geometry's influence on the spectrum, as we shall do in Section~\ref{sec:2D-curv} below. For that reason, we introduce a somehow more complicated operator accounting for the curvature term. This is very similar to the setting of the magnetic Neumann Laplacian \cite{helffer2001magnetic}.

We fix $a\in(-1,0)$, $\delta\in(0,\frac1{12})$, $M>0$ and $h_0>0$ such that, for all $h\in(0,h_0)$, $M h^{\frac12-\delta}<\frac13$. 
That way, for  $\mathfrak k\in [-M,M]$, we can introduce the positive function $a_h=(1-\mathfrak k h^\frac12 \tau)$ and the Hilbert space
$L^2\big( (-h^{-\delta},h^{-\delta});a_h\,d\tau\big)$ with the weighted inner product
\[\langle u,v\rangle=\int_{-h^{-\delta}}^{h^{-\delta}} u(\tau)\overline{v(\tau)}\,(1-\mathfrak k h^\frac 12\tau)\,d\tau. \]
For $\xi\in\R$, we  introduce the self-adjoint operator
\begin{multline}\label{eq:H-beta}
\mathcal H_{a,\xi,\mathfrak k,h}=-\frac {d^2}{d\tau^2}+(\sigma \tau+\xi)^2+\mathfrak k h^\frac 12(1- \mathfrak kh^\frac 12\tau)^{-1}\partial_\tau+2\mathfrak k  h^\frac 12 \tau\left(\sigma \tau+\xi-\mathfrak k h^\frac 12 \sigma \frac {\tau^2}2\right)^2\\-\mathfrak k h^\frac 12 \sigma \tau^2 (\sigma \tau+\xi)+\mathfrak k^2 h\sigma^2\frac {\tau^4}4,
\end{multline}
where $\sigma(\cdot)$ is the function in~\eqref{eq:potential}. The domain of definition of this operator is
\begin{equation}\label{eq:dom-1dw}
\dom(\mathcal H_{a,\xi,\mathfrak k,h})=\{u\in H^2(-h^{-\delta},h^{-\delta})~:~u(\pm h^{-\delta})=0\}.
\end{equation}
The operator $\mathcal H_{a,\xi,\mathfrak k,h}$ is the Friedrichs extension in $L^2\big( (-h^{-\delta},h^{-\delta});a_hd\tau\big)$ associated to the quadratic form $q_{a,\xi,\mathfrak k,h}$ defined by
\[q_{a,\xi,\mathfrak k,h}(u)=\int_{-h^{-\delta}}^{h^{-\delta}}\Big(|u'(\tau)|^2+(1+2\mathfrak k h^\frac 12\tau)\Big(\sigma \tau+\xi-\mathfrak k h^\frac 12 \sigma \frac {\tau^2}2\Big)^2u^2(\tau)\Big)(1-\mathfrak k h^\frac 12\tau)\,d\tau.\]
The operator $\mathcal H_{a,\xi,\mathfrak k,h}$ is with compact resolvent. We denote by $\big(\lambda_n(\mathcal H_{a,\xi,\mathfrak k,h})\big)_{n\geq 1}$ its sequence of min-max eigenvalues.

By Theorem~\ref{thm:main}, $\mu_a(\cdot)$ has a unique minimum $\beta_a$ (attained at $\zeta_a$) which is non-degenerate, and the moment $M_3(a)$ in \eqref{eq:m3} is negative, thereby allowing us to derive the following result on the ground state energy of $\mathcal H_{a,\xi,\mathfrak k,h}$.

\begin{proposition}\label{prop:1dw}
Let $\beta_{a,\mathfrak k,h}=\inf\limits_{\xi\in\R}\lambda_1\big(\mathcal H_{a,\xi,\mathfrak k,h}\big)$. Then, as $h\to0_+$,
\[\beta_{a,\mathfrak k,h}=\beta_a+\mathfrak k M_3(a) h^\frac 12+\mathcal O(h^\frac 3{4})\]
uniformly with respect to $\mathfrak k\in[-M,M]$.
\end{proposition}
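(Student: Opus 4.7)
The plan is to prove Proposition~\ref{prop:1dw} by matching upper and lower bounds on $\lambda_1(\mathcal H_{a,\xi,\mathfrak k,h})$ and optimizing in $\xi$, with the moment $M_3(a)$ emerging through an integration by parts against the ground state $\phi_a$.

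\textbf{Upper bound.} I would fix $\xi=\zeta_a$ and use the trial state $u_h(\tau)=\chi(h^{\delta}\tau)\phi_a(\tau)$, with $\chi\in C_c^\infty(-1,1)$ equal to one on $[-1/2,1/2]$, so that $u_h\in \dom(\mathcal H_{a,\zeta_a,\mathfrak k,h})$. Expanding the integrand of $q_{a,\zeta_a,\mathfrak k,h}(u_h)$ in powers of $h^{1/2}$ via $(1+2\mathfrak k h^{1/2}\tau)(1-\mathfrak k h^{1/2}\tau)=1+\mathfrak k h^{1/2}\tau+O(h\tau^2)$ and $(\sigma\tau+\zeta_a-\mathfrak k h^{1/2}\sigma\tau^2/2)^2=(\sigma\tau+\zeta_a)^2-\mathfrak k h^{1/2}\sigma\tau^2(\sigma\tau+\zeta_a)+O(h\tau^4)$, and collecting the coefficient of $\mathfrak k h^{1/2}$ (the cutoff error is exponentially small by the Gaussian-type decay of $\phi_a$), I expect to get
\begin{equation*}
\frac{q_{a,\zeta_a,\mathfrak k,h}(u_h)}{\|u_h\|^2_{a_h}}=\beta_a+\mathfrak k h^{1/2}\,I+O(h),
\end{equation*}
where, after the elementary simplification $\tau(\sigma\tau+\zeta_a)^2-\sigma\tau^2(\sigma\tau+\zeta_a)=\zeta_a\tau(\sigma\tau+\zeta_a)$,
\begin{equation*}
I=-\int_\R\tau|\phi_a'(\tau)|^2\,d\tau+\zeta_a\int_\R\tau(\sigma\tau+\zeta_a)\phi_a^2\,d\tau+\beta_a\int_\R\tau\phi_a^2\,d\tau.
\end{equation*}
The algebraic heart of the upper bound is to identify $I=M_3(a)$: multiplying $-\phi_a''+(\sigma\tau+\zeta_a)^2\phi_a=\beta_a\phi_a$ by $\tau\phi_a$ and integrating on $\R_\pm$ separately, the jump terms at $0$ cancel and give $\int_\R\tau|\phi_a'|^2\,d\tau=\beta_a\int_\R\tau\phi_a^2\,d\tau-\int_\R\tau(\sigma\tau+\zeta_a)^2\phi_a^2\,d\tau$. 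Splitting $\tau(\sigma\tau+\zeta_a)^k=\sigma^{-1}(\sigma\tau+\zeta_a)^{k+1}-\zeta_a\sigma^{-1}(\sigma\tau+\zeta_a)^k$ and using $M_1(a)=0$ from Proposition~\ref{prop:moment}, a two-line computation gives $I=M_3(a)-\zeta_a M_2(a)+\zeta_a M_2(a)=M_3(a)$.

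\textbf{Lower bound and optimization.} For the lower bound, I would combine a fiberwise estimate on $\lambda_1(\mathcal H_{a,\xi,\mathfrak k,h})$ with the non-degenerate minimum of $\mu_a$ at $\zeta_a$. First, $\mu_a(\xi)\to|a|>\beta_a$ as $\xi\to-\infty$, $\mu_a(\xi)\to+\infty$ as $\xi\to+\infty$, and a quadratic-form comparison between $q_{a,\xi,\mathfrak k,h}$ and $q_a[\xi]$ reduce the optimization to a compact window $|\xi-\zeta_a|\leq C$. On this window, weighted Agmon-type estimates for the ground state of $\mathcal H_{a,\xi,\mathfrak k,h}$, uniform in $\xi$ and $\mathfrak k\in[-M,M]$ and ensuring that the $\tau^3$ and $\tau^4$ contributions in \eqref{eq:H-beta} are only $O(h)$ in the Rayleigh quotient, together with the expansion used for the upper bound (applied now to $\varphi_{a,\xi}$ in place of $\phi_a$) yield
\begin{equation*}
\lambda_1(\mathcal H_{a,\xi,\mathfrak k,h})\geq \mu_a(\xi)+\mathfrak k h^{1/2}N(\xi)-Ch,
\end{equation*}
with $N$ the continuous extension of $I$ to general $\xi$, smooth in $\xi$ and satisfying $N(\zeta_a)=M_3(a)$. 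Invoking Proposition~\ref{prop:uniq} so that $\mu_a(\xi)\geq\beta_a+c(\xi-\zeta_a)^2$ near $\zeta_a$, together with $N(\xi)=M_3(a)+O(|\xi-\zeta_a|)$, and minimizing in $\xi$ (the optimum occurs at $|\xi-\zeta_a|\sim h^{1/4}$) gives $\inf_\xi\lambda_1(\mathcal H_{a,\xi,\mathfrak k,h})\geq\beta_a+\mathfrak k h^{1/2}M_3(a)-Ch^{3/4}$.

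\textbf{Main obstacle.} The most delicate step is the uniform-in-$\xi$ $h^{1/2}$-expansion of $\lambda_1(\mathcal H_{a,\xi,\mathfrak k,h})$ with an $O(h)$ remainder; this requires weighted exponential decay estimates for the ground state of $\mathcal H_{a,\xi,\mathfrak k,h}$ that hold uniformly in $\xi$ over the compact window and in $\mathfrak k\in[-M,M]$, so as to absorb both the $\mathfrak k^2 h\sigma^2\tau^4/4$ term and the first-order drift $\mathfrak k h^{1/2}a_h^{-1}\partial_\tau$ in \eqref{eq:H-beta}. The $h^{3/4}$ rate in the final statement then emerges solely from the balance between the $(\xi-\zeta_a)^2$ penalty and the $h^{1/2}|\xi-\zeta_a|$ drift carried by $N(\xi)$, a balance made available precisely by the non-degenerate minimum established in Proposition~\ref{prop:uniq}.
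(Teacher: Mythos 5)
Your upper bound is correct and in fact more elementary than the paper's route: taking the trial state $\chi(h^\delta\tau)\phi_a$ at $\xi=\zeta_a$, expanding the quadratic form and identifying the $\mathfrak k h^{1/2}$-coefficient as $M_3(a)$ by the integration-by-parts identity you describe (the boundary contributions at $0$ cancel precisely because the weight $\tau$ vanishes there, and then $M_1(a)=0$ from Proposition~\ref{prop:moment} closes the computation). That gives $\inf_\xi\lambda_1\leq\beta_a+\mathfrak k M_3(a)h^{1/2}+O(h)$.

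The lower bound is where the argument has a genuine gap. You write that ``the expansion used for the upper bound (applied now to $\varphi_{a,\xi}$ in place of $\phi_a$)'' yields $\lambda_1(\mathcal H_{a,\xi,\mathfrak k,h})\geq\mu_a(\xi)+\mathfrak k h^{1/2}N(\xi)-Ch$. Plugging the 1D trial function $\varphi_{a,\xi}$ into the Rayleigh quotient gives the reverse inequality, an \emph{upper} bound $\lambda_1\leq\mu_a(\xi)+\mathfrak k h^{1/2}N(\xi)+O(h)$; the sign of $\mathfrak k N(\xi)$ is not controlled, so this does not imply the claimed lower bound. To obtain the lower bound on the $h^{1/2}$-coefficient you must control the \emph{true} ground state $\psi_h$ of $\mathcal H_{a,\xi,\mathfrak k,h}$: Agmon decay alone only bounds the weighted moments of $\psi_h$ and yields $q_a[\xi](\psi_h)\geq\mu_a(\xi)\|\psi_h\|^2$, but the first-order correction $\mathcal Q(\psi_h)$ is not comparable to $N(\xi)$ unless $\psi_h$ is shown to be close to $\varphi_{a,\xi}$ (up to $O(h^{1/2})$ in a suitable norm). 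That requires using the spectral gap $\lambda_2(\mathfrak h_a[\zeta_a])>\beta_a$ together with an orthogonal decomposition $\psi_h=c_h\varphi_{a,\xi}+r_h$, or, as the paper does, constructing a quasi-mode $(\lambda^{\rm app},f^{\rm app})$ accurate in $(\xi-\zeta_a)$ and $h^{1/2}$ and invoking the spectral theorem plus the gap to conclude that the eigenvalue it approximates is precisely $\lambda_1$. This is the mechanism missing from your sketch; without it the lower bound, and hence the final estimate, is not established.

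Two smaller points. The error budget in your optimization is inconsistent with the structure you set up: if $\lambda_1\geq\mu_a(\xi)+\mathfrak k h^{1/2}N(\xi)-Ch$ held with $N$ smooth, minimizing $c(\xi-\zeta_a)^2+O(h^{1/2}|\xi-\zeta_a|)$ balances at $|\xi-\zeta_a|\sim h^{1/2}$ and gives an $O(h)$ (not $O(h^{3/4})$) remainder; the $h^{3/4}$ rate in the paper comes from a coarser two-sided quasi-mode error $O(\max(h^{1/2}|\xi-\zeta_a|,|\xi-\zeta_a|^3,h))$ restricted to the window $|\xi-\zeta_a|\leq h^{1/4}$. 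Also, reducing to a compact window requires the quadratic-form comparison $|\lambda_n(\mathcal H_{a,\xi,\mathfrak k,h})-\lambda_n(\mathfrak h_a[\xi])|\lesssim h^{1/2-2\delta}(1+\lambda_n(\mathfrak h_a[\xi]))$ and the non-degeneracy of $\zeta_a$ (the paper's \eqref{eq:min-max-1dw}--\eqref{eq:mu1>>beta-a}); stating ``$\mu_a\to|a|>\beta_a$'' alone does not suffice because the operators live on different intervals and the form comparison must be made explicit.
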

\begin{proof}
We will present the outline of the proof to show the role  of Theorem~\ref{thm:main}. A similar approach was detailed in~\cite[Theorem 11.1]{helffer2001magnetic} (see also~\cite[Section~4.2]{kachmar2007these}).
By the min-max principle, there exists $C>0$ such that for all $n\geq 1$, $\xi\in\R$ and $h\in(0,h_0)$,
\begin{equation}\label{eq:min-max-1dw}
\big|\lambda_n(\mathcal H_{a,\xi,\mathfrak k,h})-\lambda_n(\mathfrak h_a[\xi])  \big|\leq Ch^{\frac12-2\delta}\big(1+\lambda_n(\mathfrak h_a[\xi]) \big),
\end{equation}
where $\mathfrak h_a[\xi]$ is the fiber operator in \eqref{eq:ham_operator}.  Consequently, we may find a constant $z(a)>0$ such that
\begin{equation}\label{eq:mu1>>beta-a}
{\rm for~}|\xi-\zeta_a|\geq z(a)h^{\frac14-\delta}, ~\lambda_1(\mathcal H_{a,\xi,\mathfrak k,h})\geq \beta_a+  h^{\frac12-2\delta}.
\end{equation}
Note that \eqref{eq:mu1>>beta-a} is a consequence of the fact that $\zeta_a$ is a non-degenerate minimum of $\mu_a(\cdot)$.

Now, we estimate $\lambda_1(\mathcal H_{a,\xi,\mathfrak k,h})$ for $|\xi-\zeta_a|\leq z(a)h^{\frac14-\delta}\ll1$. By \eqref{eq:min-max-1dw}, the simplicity of the eigenvalues $\lambda_n(\mathfrak h_a[\xi]) $ and the continuity of the function $\xi\mapsto\lambda_n(\mathfrak h_a[\xi]) $,  we know that
as $h\to0_+$,
\[ \lambda_1(\mathcal H_{a,\xi,\mathfrak k,h}) =\beta_a+o(1)~\&~\lambda_2(\mathcal H_{a,\xi,\mathfrak k,h})=\lambda_2(\mathfrak h_{a}[\zeta_a])+o(1),\]
with 
\begin{equation}\label{eq:gap}
\lambda_2(\mathfrak h_{a}[\zeta_a])>\lambda_1(\mathfrak h_{a}[\zeta_a])=\beta_a.
\end{equation}
One may construct a formal eigen-pair $(\lambda^{\rm app}_{a,\xi,\mathfrak k,h},f^{\rm app}_{a,\xi,\mathfrak k,h})$ of the operator $\mathcal H_{a,\xi,\mathfrak k,h}$, with
\begin{multline}\label{eq:app-eigenpair}
\lambda^{\rm app}_{a,\xi,\mathfrak k,h}=c_0+c_1(\xi-\zeta_a)+c_2(\xi-\zeta_a)^2+c_3h^{1/2}{\rm ~and}~\\
f^{\rm app}_{a,\xi,\mathfrak k,h}= u_0+(\xi-\zeta_a)u_1+(\xi-\zeta_a)^2u_2+h^{1/2}u_3.
\end{multline}
Expanding $ R_h:=\big(\mathcal H_{a,\xi,\mathfrak k,h}-\lambda^{\rm app}_{a,\xi,\mathfrak k,h}\big)f^{\rm app}_{a,\xi,\mathfrak k,h}$ in powers of $(\xi-\zeta_a)$ and $h^{1/2}$, one can choose $(c_i,u_i)_{0\leq i\leq 3}$ so as the coefficients of the $h^{1/2}$ and $(\xi-\zeta_a)^j$ terms, $j=0,1,2$,   vanish. We choose
\begin{align*}
&c_0=\beta_a,~u_0=\phi_a\\
&c_1=0,~u_1=-2 \mathfrak R_a v_1,~v_1:=(\sigma \tau+\zeta_a)\phi_a\perp \phi_a\\
&c_2=1-4\int_{-\infty}^{+\infty}(\sigma\tau+\zeta_a)\phi_a\mathfrak R_a[(\sigma\tau+\zeta_a)\phi_a]\,dt,~u_2=\mathfrak R_av_2,\\
&\qquad\qquad\qquad
 v_2:=4(\sigma \tau+\zeta_a)\mathfrak R_a[(\sigma\tau+\zeta_a)\phi_a]+(c_2-1)\phi_a\perp\phi_a\\
&c_3=\mathfrak kM_3(a),~
 u_3=\mathfrak R_av_3,\\
 &\qquad\qquad v_3:=-\mathfrak k\Big(\partial_\tau+\frac 1\sigma(\sigma \tau+\zeta_a)^3-\frac {\zeta_a^2}{\sigma}(\sigma\tau+\zeta_a)\Big)\phi_a+c_3\phi_a\perp\phi_a,
\end{align*}
where $\mathfrak R_a\in \mathcal L(L^2(\R))$ is the regularized resolvent introduced in \eqref{eq:R}. That the functions $v_1,v_2,v_3$ are orthogonal to $\phi_a$ is ensured by our choice of $c_1,c_2,c_3$, the expressions of the moments in Proposition~\ref{prop:moment}, and  the first item in Remark~\ref{rem:moment}.

Eventually,  using $\chi( h^{\delta}\tau) f^{\rm app}_{a,\xi,\mathfrak k,h} $ as a quasi-mode, with $\chi$ a cut-off function introduced to insure the Dirichlet condition at $\tau=\pm h^{-\delta}$, we get by the spectral theorem and~\eqref{eq:gap}, 
\begin{equation}\label{eq:min-1Dw-xi}
\lambda_1(\mathcal H_{a,\xi,\mathfrak k,h})= c_0+c_2(\xi-\zeta_a)^2+c_3h^{1/2}+\mathcal O\big(\max(h^{1/2}|\xi-\zeta_a|,|\xi-\zeta_a|^3,h)\big).
\end{equation}
Note that,  for $|\xi-\zeta_a|\leq z(a)h^{\frac14-\delta}$, we have
\[\mathcal O\big(\max(h^{1/2}|\xi-\zeta_a|,|\xi-\zeta_a|^3),h\big)=\mathcal O( h^{3(\frac14-\delta)}).\]
In order to minimize over $\xi$, we observe that the constant $c_2$ can be expressed in the pleasant form\footnote{Using the Feynman-Hellmann formula 
$\mu_a'(\xi)=\langle  (\zeta_a+\sigma(\tau)\tau)\varphi_{a,\xi},\varphi_{a,\xi}\rangle$ \cite[Eq.~(A.9)]{Assaad2019}.}
\[c_2=\frac12\partial^2_\xi \mu_a(\zeta_a),\] 
hence  $c_2>0$ by Theorem~\ref{thm:main}. So, we get from \eqref{eq:mu1>>beta-a} and \eqref{eq:gap},
\begin{equation}\label{eq:min-1Dw}
 \inf_{\xi\in\R}\lambda_1(\mathcal H_{a,\xi,\mathfrak k,h})= c_0+c_3h^{1/2}+\mathcal O(h^{\frac 3{2}(\frac12-\delta)}).
 \end{equation}
 To improve the error in \eqref{eq:min-1Dw},  notice that, by \eqref{eq:min-1Dw-xi}, it is enough to minimize over $\{|\xi-\zeta_a|\leq h^{\frac14}\}$, thereby  finishing the proof of Theorem~\ref{prop:1dw}.
 \end{proof}
\begin{rem}\label{rem:app-ep}
The approximate eigen-pair $(\lambda^{\rm app}_{a,\xi,\mathfrak k,h},f^{\rm app}_{a,\xi,\mathfrak k,h})$ in \eqref{eq:app-eigenpair} does not depend on the parameter  $\delta$ introduced in \eqref{eq:dom-1dw}. Moreover, we have, for $|\xi-\zeta_a|<1$,
\[ \big\| \big(\mathcal H_{a,\xi,\mathfrak k,h} -\lambda^{\rm app}_{a,\xi,\mathfrak k,h}\big) f^{\rm app}_{a,\xi,\mathfrak k,h}\big\|_{L^2(\R)}=\mathcal O\big(\max(h^{1/2}|\xi-\zeta_a|,|\xi-\zeta_a|^3,h)\big) .\]
\end{rem}
\subsection{Magnetic edge \& semi-classical ground state energy}\label{sec:2D-curv}\

With the precise estimate for the ground state energy of  weighted operator of Section~\ref{sec:1Dw} in hand, we can inspect edge states for the Dirichlet Laplace operator with a magnetic step field. 

\subsubsection{Magnetic edge, the domain and the operator}\

Consider a smooth planar curve $\Gamma\subset\R^2$ that splits $\R^2$ into two disjoint unbounded open sets, $P_{\Gamma,1}$ and $P_{\Gamma,2}$. We will refer to $\Gamma$ as the magnetic edge, since we are going to consider magnetic fields having a jump along $\Gamma$ (see Fig.~1).

Now consider an open bounded simply connected  subset $\Omega$ of $\R^2$, with smooth  boundary $\partial\Omega$ of class $C^1$,   and assume that
	\begin{enumerate}
	\item $\Gamma$ intersects $\partial \Om$ at two distinct points $p$ and $q$, and the intersection is transversal, i.e. $\mathrm{T}_{\partial \Omega} \times \mathrm{T}_{\Gamma} \neq 0$ on $\{p,q\}$, where $\mathrm{T}_{\partial \Omega}$ and  $\mathrm{T}_{\Gamma}$ are respectively unit tangent vectors of $\partial \Omega$ and $\Gamma$.
	\item $\Omega_1:=\Omega\cap P_{\Gamma,1}\not=\emptyset$ and  $\Omega_2:=\Omega\cap P_{\Gamma,2}\not=\emptyset$.
	\end{enumerate} 
\begin{figure}
\includegraphics[scale=0.8]{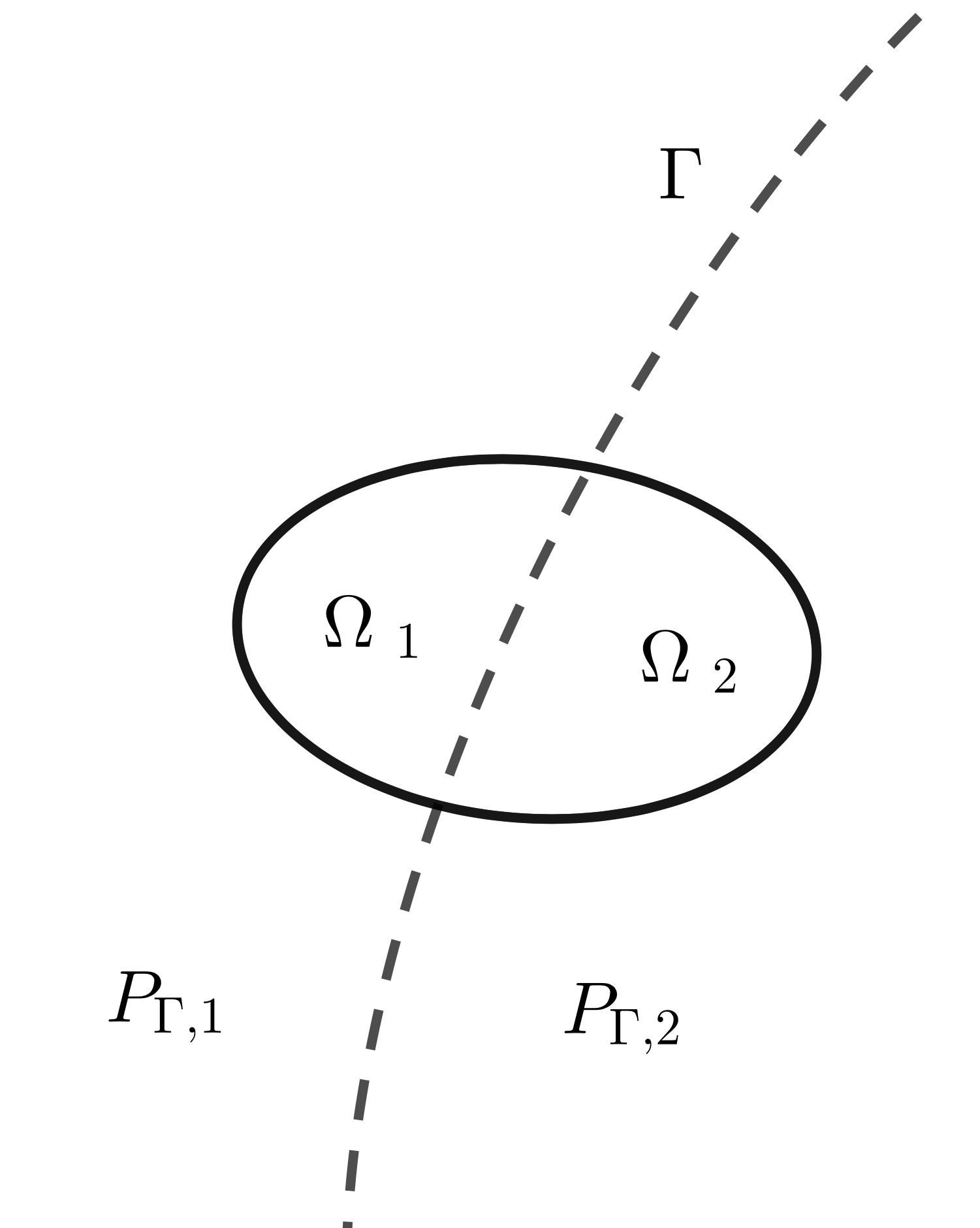}
\caption{The curve $\Gamma$ splits $\R^2$ into two regions, $P_{\Gamma,1}$ \& $P_{\Gamma,2}$, and the domain $\Omega$ into two domains $\Omega_1$ \& $\Omega_2$.}
\end{figure}
Fix $a\in (-1,0)$. Let $\Fb_a \in H^1(\Om,\R^2)$ be a magnetic potential with the corresponding  scalar magnetic field:
\begin{equation}\label{eq:curl-F}
\curl \Fb_a=B_a:=\mathbf 1_{\Omega_1}+a\mathbf 1_{\Omega_2}.
\end{equation}

We consider the Dirichlet realization of the self-adjoint operator in the domain $\Omega$
\[\mathcal P_{h,a}=-(h\nabla-i\Fb_a)^2=-h^2\Delta+ih(\Div \Fb_a+\Fb_a\cdot\nabla)+|\Fb_a|^2,\]
with domain
\[\dom(\mathcal P_{h,a})=\{u\in L^2(\Om)~:~(h\nabla-i\Fb_a)^j u \in L^2(\Om),\, j\in\{1,2\},u|_{\partial \Om}=0 \},\]
and quadratic form
\begin{equation}\label{eq:qf-MLD}
\mathfrak q_{h,a}(u)=\int_\Omega|(h\nabla-i\Fb_a)u|^2\,dx\quad(u\in H^1_0(\Omega)).
\end{equation}
The bottom of the spectrum of this operator is introduced as follows
\begin{equation}\label{eq:gse-ML}
\lambda_1(\mathcal P_{h,a})=\inf_{u\in H^1_0(\Omega)\setminus\{0\}}\frac{\mathfrak q_{h,a}(u)}{\|u\|_{L^2(\Omega)}^2}.
\end{equation}

\subsubsection{Frenet coordinates near the magnetic edge}\label{sec:bc}\

We introduce the Frenet coordinates  near $\Gamma$. We  refer the reader to~\cite[Appendix~F]{fournais2010spectral} and~\cite{Assaad2019} for a similar setup. 

Let $s\mapsto M(s)\in\Gamma$ be  the arc length parametrization of $\Gamma$ such that
\begin{itemize}
\item $\nu(s)$ is the unit normal of $\Gamma$ at the point $M(s)$ pointing to $P_{\Gamma,1}$\,;
\item   $T(s)$ is the unit tangent vector to $\Gamma$ at the point $M(s)$, such that $(T(s),\nu(s))$ is a direct frame, i.e.
$\mathrm{det}\big(T(s),\nu(s) \big)=1$.
\end{itemize}
Now, we define the curvature $k$ of $\Gamma$  as follows
$T'(s)=k(s)\nu(s)$.
For $\epsilon>0$, we define the transformation
\begin{equation}\label{Frenet}
\Phi~:~\R\times(-\epsilon,\epsilon)\, \ni (s,t)\longmapsto M(s)+t \nu(s) \in \Gamma_\epsilon:=\{x\in\R^2~:~{\rm dist}(x,\Gamma)<\epsilon\}.
\end{equation}
We pick $\epsilon$ sufficiently small so that $\Phi$ is a diffeomorphism, whose  Jacobian is 
\begin{equation}\label{eq:a1}
\mathfrak a(s,t):=J_\Phi(s,t)=1-tk(s).
\end{equation}
There is a natural correspondence between functions in  $H^1\big(\Gamma_\epsilon \big)$ and those in $  H^1\big(\R\times(-\epsilon,\epsilon)\big)$. In fact, to every $u\in H^1\big(\Gamma_\epsilon \big)$ we assign $ \tilde u\in  H^1\big(\R\times(-\epsilon,\epsilon)\big)$ 
\begin{equation}\label{eq:u-(s,t)}
\tilde{u}(s,t)=u\big(\Phi(s,t)\big),
\end{equation}
and vice versa.

The vector field $\Fb_a$ can be extended in a natural manner to a vector field in $H^1(\R^2)$. Seen as a  vector field on $\Gamma_\epsilon$, it gives rise to a vector field on $\R\times(-\epsilon,\epsilon)$ as follows
\[\Fb_a(x)=\big(F_{a,1}(x),F_{a,2}(x)\big) \mapsto \tilde{\Fb}_a(s,t)= \big(\tilde F_{a,1}(s,t),\tilde F_{a,2} (s,t)),\]
where 
\begin{equation}\label{eq:A_tild1}
\tilde F_{a,1}(s,t)=\mathfrak a(s,t)\Fb_a\big(\Phi(s,t) \big)\cdot T(s)\quad\mathrm{and}\quad \tilde F_{a,2} (s,t)=\Fb_a\big(\Phi(s,t) \big)\cdot\nu(s).
\end{equation}
Finally, we note the  change of variable formula (for functions compactly supported in $\Gamma_\epsilon$):
\begin{equation}\label{eq:A_tild2}
\begin{aligned}
& \int_{\Gamma_\epsilon}|u |^2\,dx=\int_\R\int_{-\epsilon}^{\epsilon}|\tilde u|^2\, \mathfrak a\,dt\,ds~\&~\\
&\int_{\Gamma_\epsilon}\big|\big(h\nabla-i \Fb_a \big)u \big|^2\,dx=\int_\R\int_{-\epsilon}^{\epsilon}\left(\mathfrak a^{-2}\big|(h\partial_s-i\tilde{F}_{a,1})\tilde{u}\big|^2+\big|(h\partial_t-i\tilde{F}_{a,2})\tilde{u}\big|^2 \right)\, \mathfrak a\,dt\,ds.
\end{aligned}
\end{equation}

\subsubsection{Ground state energy and curvature of the magnetic edge}\

We introduce the maximal curvature of $\Gamma$ in $\Omega$ as follows
\begin{equation}\label{eq:k-max}
k_{\max}^\Omega=\max_{x\in\Gamma\cap\overline{\Omega}}\Big(k\big(\Phi^{-1}(x)\big)\Big).
\end{equation}

\begin{theorem}\label{thm:D-Lap}
There exist positive constants $c_a,C_a,h_a$ such that the ground state energy in \eqref{eq:gse-ML} satisfies, for all $h\in(0,h_a)$,
\[-c_ah^\frac {5}{3}\leq \lambda_1(\mathcal P_{h,a})-\big(\beta_a h+M_3(a) k_{\max }^\Omega h^{\frac 32}\big)\leq C_ah^\frac {7}{4}.\]
\end{theorem}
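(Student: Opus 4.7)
The plan is to prove a two-sided estimate: an upper bound via a trial state concentrated near a point of $\Gamma$ with maximal curvature, and a lower bound via an IMS partition of unity reducing the problem locally near $\Gamma$ to Proposition~\ref{prop:1dw}. The underlying mechanism is that, after rescaling by the magnetic length $h^{1/2}$ and a suitable gauge, the Dirichlet operator $\mathcal P_{h,a}$ near a point of $\Gamma$ with curvature $\mathfrak k$ is unitarily equivalent at leading order to $h\cdot\mathcal H_{a,\xi,\mathfrak k,h}$; since $M_3(a)<0$ for $a\in(-1,0)$ by Remark~\ref{rem:moment}(2) and Theorem~\ref{thm:main}, the linear correction $M_3(a)\mathfrak k h^{3/2}$ is minimized by the largest positive curvature, which singles out $k_{\max}^\Omega$.

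For the upper bound, pick $x_0\in\Gamma\cap\overline\Omega$ realizing the maximum in~\eqref{eq:k-max} (and, if $x_0\in\{p,q\}$, replace it by a nearby interior point of $\Gamma\cap\overline\Omega$ at which $k$ is within $h^{1/4}$ of $k_{\max}^\Omega$). In Frenet coordinates $(s,t)$ centered at $x_0$, perform a gauge transformation absorbing the leading tangential part of $\tilde{\Fb}_a$ into a phase $e^{i\phi_h/h}$ and use the trial state
\[
u_h(x)=e^{i\phi_h(x)/h}\,\chi(h^{-1/4}s)\,e^{i\xi s/h^{1/2}}\,f^{\rm app}_{a,\xi,k_{\max}^\Omega,h}\!\big(h^{-1/2}t\big),
\]
with $\chi\in C_c^\infty(\R)$ and $\xi$ chosen near $\zeta_a$ to minimize the approximate energy. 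Expanding $\mathfrak q_{h,a}(u_h)/\|u_h\|^2$ through~\eqref{eq:A_tild1}--\eqref{eq:A_tild2}, the metric expansion $\mathfrak a=1-h^{1/2}\mathfrak k\tau$, and the remainder estimate of Remark~\ref{rem:app-ep}, produces the upper bound $\beta_a h+M_3(a)k_{\max}^\Omega h^{3/2}+C_a h^{7/4}$.

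For the lower bound, introduce a partition of unity $\sum_j\chi_j^2=1$ on $\overline\Omega$ at tangential scale $h^\rho$, with four kinds of cells: bulk cells at distance $\gtrsim h^\rho$ from $\Gamma\cup\partial\Omega$; tubes around $\partial\Omega\setminus\{p,q\}$ of normal width $h^{1/2-\delta}$ and tangential length $h^\rho$; tubes around $\Gamma\setminus\{p,q\}$ of normal half-width $h^{1/2-\delta}$ and tangential length $h^\rho$; and two exceptional cells of diameter $h^\rho$ around $p$ and $q$. The IMS formula
\[
\mathfrak q_{h,a}(u)=\sum_j\mathfrak q_{h,a}(\chi_j u)-h^2\sum_j\big\||\nabla\chi_j|\,u\big\|_{L^2}^2
\]
yields a localization cost of $\mathcal O(h^{2-2\rho})$. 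In the bulk cells, the usual magnetic lower bound gives $\geq\min(1,|a|)h$, exceeding $\beta_a h$ by a fixed positive amount thanks to Theorem~\ref{thm:main}(2); in the boundary tubes one obtains $\geq\Theta_0 h$ by the classical de\,Gennes reduction, exceeding $\beta_a h$ by Proposition~\ref{prop:beta-theta}. In each $\Gamma$-tube centered at $s_j$, the change of variables~\eqref{eq:A_tild2} together with a gauge freezing the curvature at $s_j$ reduces the local quadratic form to $h\cdot\mathcal H_{a,\xi,k(s_j),h}$ up to an error of order $h^{3/2+\rho}$ from the variation of $\mathfrak k$ in the coefficients; Proposition~\ref{prop:1dw} then gives the local lower bound $\beta_a h+k(s_j)M_3(a)h^{3/2}-Ch^{7/4}-Ch^{3/2+\rho}$, and $k(s_j)M_3(a)\geq k_{\max}^\Omega M_3(a)$ because $M_3(a)<0$. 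Balancing $h^{2-2\rho}$ against $h^{3/2+\rho}$ yields $\rho=1/6$ and the target error $h^{5/3}$.

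The main obstacle will be the exceptional cells near $p,q$, where no single model applies because $\partial\Omega$ and $\Gamma$ meet: neither the reduction to $\mathcal H_{a,\xi,\mathfrak k,h}$ nor the de\,Gennes model is available, and one must argue that any ground state of $\mathcal P_{h,a}$ has only $\mathcal O(h^{1/2+\alpha})$ of its $L^2$-mass in these area-$\mathcal O(h^{2\rho})$ regions, using the transversality assumption together with an Agmon-type exponential decay estimate at scale $h^{1/2}$. A secondary technicality is the gauge adjustment in each $\Gamma$-tube: because $B_a$ is discontinuous along $\Gamma$, the vector potential $\tilde{\Fb}_a$ must be primitive-corrected separately in $\Omega_1$ and $\Omega_2$ and matched across the interface in a way that the leftover remainder is $o(h^{1/2})$ in operator norm, compatibly with the $h^{5/3}$ accuracy.
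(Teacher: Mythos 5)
Your overall strategy — trial state concentrated at maximal curvature for the upper bound, partition of unity plus reduction to $\mathcal H_{a,\xi,\mathfrak k,h}$ (Proposition~\ref{prop:1dw}) for the lower bound, with $M_3(a)<0$ singling out $k_{\max}^\Omega$ and the optimization $\rho=\tfrac16$ yielding $h^{5/3}$ — coincides with the paper's proof. Two points, however, need attention.

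First, the tangential cutoff $\chi(h^{-1/4}s)$ in your trial state is too coarse. Since the $s$-dependence of the model quasi-mode is a pure plane wave $e^{i\zeta_a s/h^{1/2}}$ (no $L^2$-decay in $s$), the IMS-type commutator produces an error of order $h^2\ell^{-2}$ relative to the main term, where $\ell$ is the tangential cutoff scale. With $\ell=h^{1/4}$ this is $\mathcal O(h^{3/2})$, the \emph{same} order as the curvature correction $M_3(a)k_{\max}^\Omega h^{3/2}$ you are trying to capture, so your upper bound cannot reach $\mathcal O(h^{7/4})$. The paper takes $\ell=h^{1/8}$ in~\eqref{eq:trial-state-2D}, giving $h^2\ell^{-2}=h^{7/4}$, and also shifts the center $s_h$ by $h^{1/8}$ into the interior (which perturbs the curvature value only at an admissible order, not by $h^{1/4}$ as you suggest).

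Second, your main worry about ``exceptional cells'' at $p,q$ is unfounded for the lower bound. Because the boundary condition on $\partial\Omega$ is Dirichlet, any trial function in $H^1_0(\Omega)$ extends by zero to an $H^1$ function on the full tubular neighborhood $\Gamma_\epsilon$ of $\Gamma$ in $\R^2$; in the Frenet coordinates this extended function lies in the form domain of $\mathcal H_{a,\xi,\mathfrak k,h}$ after the partial Fourier transform and scaling, and adding the extra constraint ``vanish outside $\Omega$'' can only raise the Rayleigh quotient. Thus the same local lower bound from Proposition~\ref{prop:1dw} applies in the cells containing $p$ and $q$ with no special argument, which is precisely why the paper indexes $x_1=p$, $x_{N_h}=q$ in the partition and then treats all $j$ uniformly. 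Your proposed fix (showing $\mathcal O(h^{1/2+\alpha})$ mass near $p,q$ via transversality plus Agmon decay) would introduce an unneeded error term and a genuinely hard estimate; the paper's Agmon estimate~\eqref{eq:dec-gs} is used only to discard the region far from $\Gamma$, after which the decomposition is purely along $\Gamma$ rather than your four-cell decomposition (bulk/boundary tube/$\Gamma$-tube/corner), a mild simplification. Your observation about the piecewise gauge matching across $t=0$ is correct and is exactly Lemma~\ref{lem:Anew2}.
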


\subsubsection{Upper bound on the ground state energy}\

This will be done by the construction of a trial state involving an appropriate gauge transformation in the Frenet coordinates that we recall below.

\begin{lemma}\label{lem:Anew2}
For $x_0=\Phi(s_0,0)\in\Gamma$ and $0<\ell<\epsilon$, we introduce the neighborhood $\mathcal N(x_0,\ell)=\{\Phi(s,t)~: |s-s_0|<\ell~\&~|t|<\ell\}$.  There exists a function $\omega_\ell
	\in \mathcal N(x_0,\ell)$  such that the vector potential $\tilde{\Fb}^{\sf new}_{a}:=\tilde{\Fb}_a-\nabla_{s,t}\omega_\ell$, defined on $\mathcal N(x_0,\ell)$, satisfies
	\begin{equation}\label{eq:Anew3}
	\tilde{F}_{a,1}^{\sf new}(s,t)= 
	\begin{cases}
	- \big(t-\frac {t^2}2 k(s)\big)&\mathrm{if}~t>0\medskip\\
	-a \big(t-\frac {t^2}2 k(s)\big)&\mathrm{if}~t<0
	\end{cases}
	\quad\&\quad \tilde{F}^{\sf new}_{a,2}(s,t)=0.
	\end{equation}
\end{lemma}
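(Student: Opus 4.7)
The plan is to carry out a direct gauge-fixing argument in the Frenet coordinates, exactly in the spirit of standard normal-form constructions for the magnetic Neumann Laplacian (compare the analogous lemmas in \cite{helffer2001magnetic} and \cite{Assaad2019}). The neighborhood $\mathcal N(x_0,\ell)$ corresponds under $\Phi^{-1}$ to the rectangle $R_\ell := (s_0-\ell,s_0+\ell)\times(-\ell,\ell)$ in $(s,t)$-coordinates, which is simply connected, so the whole argument reduces to checking that $\tilde{\Fb}_a$ and the proposed $\tilde{\Fb}_a^{\sf new}$ generate the \emph{same} magnetic field on $R_\ell$ (in the distributional sense) and then invoking the Poincar\'e lemma on the difference.

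First, I recall how the scalar curl transforms under the change of variables $\Phi$. Since $\partial_s\Phi=\mathfrak a(s,t)T(s)$ and $\partial_t\Phi=\nu(s)$, the pullback of the two-form $B_a\,dx_1\wedge dx_2$ gives
\[
\partial_s \tilde F_{a,2}-\partial_t \tilde F_{a,1}=\mathfrak a(s,t)\,B_a\bigl(\Phi(s,t)\bigr),
\]
which equals $\mathfrak a(s,t)=1-t k(s)$ on $\{t>0\}$ and $a\,\mathfrak a(s,t)$ on $\{t<0\}$. Next, I differentiate the ansatz in \eqref{eq:Anew3} directly: one has $\partial_s \tilde F^{\sf new}_{a,2}=0$, and on $\{t>0\}$, $-\partial_t \tilde F^{\sf new}_{a,1}=\partial_t\bigl(t-\tfrac{t^2}2 k(s)\bigr)=1-tk(s)$, while on $\{t<0\}$ the same computation yields $a(1-tk(s))$. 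Crucially, $\tilde F_{a,1}^{\sf new}(s,0^+)=\tilde F_{a,1}^{\sf new}(s,0^-)=0$, so no singular measure on $\Gamma$ appears in the distributional $\partial_t\tilde F^{\sf new}_{a,1}$; in the same way the continuity of $\tilde F_{a,1}$ and $\tilde F_{a,2}$ across $t=0$ (which follows from $\Fb_a\in H^1(\Omega,\R^2)$ and the definitions \eqref{eq:A_tild1}) rules out any singular contribution for $\tilde{\Fb}_a$. Thus both vector fields have the same distributional curl on $R_\ell$.

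Set $\mathbf G:=\tilde{\Fb}_a-\tilde{\Fb}_a^{\sf new}$ on $R_\ell$. By the previous paragraph, $\mathbf G$ is continuous with vanishing distributional curl on the simply connected rectangle $R_\ell$. The Poincar\'e lemma then provides a function $\tilde\omega_\ell\in W^{1,\infty}(R_\ell)$ with $\nabla_{s,t}\tilde\omega_\ell=\mathbf G$; concretely, one may take the path-integral formula
\[
\tilde\omega_\ell(s,t)=\int_{s_0}^{s}G_1(s',0)\,ds'+\int_{0}^{t}G_2(s,t')\,dt',
\]
which is well defined because of the continuity across $t=0$ and whose partial derivatives recover $\mathbf G=(G_1,G_2)$ by Fubini and the curl-free condition. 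Setting $\omega_\ell:=\tilde\omega_\ell\circ\Phi^{-1}$ on $\mathcal N(x_0,\ell)$ yields the required gauge, since $\nabla_{s,t}\omega_\ell$ in the statement simply means the gradient of $\tilde\omega_\ell$ in the curvilinear coordinates, which was just shown to equal $\tilde{\Fb}_a-\tilde{\Fb}_a^{\sf new}$.

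The only subtle point in this plan is the distributional computation at $t=0$: one has to be sure that the continuity of the tangential/normal components of $\tilde{\Fb}_a$ and of $\tilde{\Fb}_a^{\sf new}$ across $\Gamma$ prevents any Dirac-mass contribution to the curl. This is the step that I expect to verify most carefully; everything else is either the explicit calculation displayed above or the classical Poincar\'e lemma on a rectangle.
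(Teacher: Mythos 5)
Your argument is correct and is precisely the standard gauge-fixing construction that the paper leaves implicit (it cites \cite[Appendix~F]{fournais2010spectral} and \cite{helffer2001magnetic} for this kind of normal form without reproducing the proof): pull back the curl via $\Phi$ using $\partial_s\Phi=\mathfrak a T$, $\partial_t\Phi=\nu$, observe that the explicit ansatz in \eqref{eq:Anew3} has the same distributional curl $\mathfrak a\,B_a\circ\Phi$ with no singular mass on $\{t=0\}$, and apply the Poincar\'e lemma on the simply connected rectangle. One small caveat: since $\Fb_a$ is only assumed $H^1$ in two dimensions, $\tilde{\Fb}_a$ (hence $\mathbf G$) need not be continuous nor in $L^\infty$, so the conclusion $\tilde\omega_\ell\in W^{1,\infty}$ is a bit strong; the Poincar\'e lemma for curl-free $L^2$ fields still yields $\tilde\omega_\ell\in H^1_{\rm loc}$ with $\nabla_{s,t}\tilde\omega_\ell=\mathbf G$ (and $H^2$ once one uses $\curl\mathbf G=0$), which is all that is needed, and the path-integral formula should be read via the $H^{1/2}$ trace on $\{t=0\}$ or replaced by a mollification argument. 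Apart from this regularity bookkeeping, your proof is the intended one.
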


Now pick $x_0=\Phi(s_0,0)\in \Gamma\cap\overline\Omega$ such that $k(s_0)=\kappa_{\max}^\Omega$. Select $x_h=\Phi(s_h,0)\in \Gamma\cap\Omega$ so that $|s-s_h|=h^{1/8}$. We introduce the trial state $u$ defined in the Frenet coordinates as follows
\begin{multline}\label{eq:trial-state-2D}
u(\Phi(s,t))=\tilde u(s,t)\\
=c_h \chi\Big(\frac{s-s_h}{h^{1/8}}\Big) \chi\Big(\frac{t}{h^{1/6}}\Big)f^{\rm app}_{a,\zeta_a,k(s_0),h}(h^{-1/2}t) \exp\Big(\frac{i\big(\zeta_a s-\omega(s,t)\big)}{h^{1/2}}\Big),
\end{multline}
where $\omega=\omega_\ell$ is the gauge function introduced in Lemma~\ref{lem:Anew2} for $\ell=2h^{1/6}$, $f^{\rm app}_{a,\zeta_a,k(s_0),h}$ is the approximate 1D eigenfunction introduced in \eqref{eq:app-eigenpair} with $\xi=\zeta_a$, $\chi$ is a cut-off function and $c_h>0$ is a constant selected so that the $L^2$-norm of $u$ in $\Omega$  is equal to $1$. We choose the cut-off function as follows:
\[\chi\in C_c^\infty(\R),~{\rm supp}\,\chi\subset[-1,1],~\chi=1~{\rm on~}[-1/2,1/2].\]
Then, we can compute $\mathfrak q_{h,a}(u)$ and get
\[\lambda_1(\mathcal P_{h,a})\leq \frac{\mathfrak q_{h,a}(u)}{\|u\|_{L^2(\Omega)}}\leq \beta_a h+k(s_0)M_3(a)h^{3/2}+\mathcal O(h^{7/4}).\]

\subsubsection{Concentration near the magnetic edge}

Fix $R_0>1$ and consider a partition of unity 
\[\sum_{j=1}^{N_h} \chi_{h,j}^2=1~{\rm in }~\Gamma_{R_0h^{1/2}}\]
such that
\[{\supp}\,\chi_{h,j}\subset \mathcal N(x_j,R_0h^{1/2})~\&~\sum_{j=1}^{N_h} |\nabla\chi_{j,h}|^2=\mathcal O(R_0^{-2}h^{-1}).\]
Also, we assume that $x_1=p$, $x_{N_h}=q$, where $\{p,q\}=\Gamma\cap\partial\Omega$.

We introduce another partition of unity $\sum\limits_{i=1}^2\varphi_{i,h}^2=1$ in $\R^2$ such that ${\supp}\varphi_{1,h}\subset\R^2\setminus\Gamma_{R_0h^{1/2}}$ and $\sum\limits_{i=1}^2|\nabla\varphi_{i,h}|^2=\mathcal O(R_0^{-2}h^{-1})$.

Pick an arbitrary $u\in H^1_0(\Omega)$. We extend $u$ by $0$ on $\R^2\setminus\Omega$. Notice that
\begin{align*}
\mathfrak q_{h,a}(u) &=\sum_{i=1}^2 \mathfrak q_{h,a}(\varphi_{i,h}u)-h^2\sum_{i=1}^2 \big\| |\nabla\varphi_{i,h}|u\big\|^2\\
&= \mathfrak q_{h,a}(\varphi_{1,h}u) +\sum_{j=1}^{N_h} \Big(\mathfrak q_{h,a}\big(\varphi_{2,h}\chi_{j,h}u\big)-h^2 \big\| |\nabla \chi_{j,h}|\varphi_{2,h}u\big\|^2\Big)-h^2\sum_{i=1}^2 \big\| |\nabla\varphi_{i,h}|u\big\|^2\\
&=\mathfrak q_{h,a}(\varphi_{1,h}u) +\sum_{j=1}^{N_h} \mathfrak q_{h,a}\big(\varphi_{2,h}\chi_{j,h}u\big)-\mathcal O(R_0^{-2}h).
\end{align*}
We bound from below each $ \mathfrak q_{h,a}\big(\varphi_{2,h}\chi_{j,h}u\big)$ as follows (see \cite{Assaad2020}) \\
\[  \mathfrak q_{h,a}\big(\varphi_{2,h}\chi_{j,h}u\big)\geq \big(\beta_a h -\mathcal O (h^{3/2} ) \big)\big\|\varphi_{2,h}\chi_{j,h}u \big\|^2.\]
Since $\curl\Fb_a$ is constant away from $\Gamma$, we bound $\mathfrak q_{h,a}(\varphi_{1,h}u) $ from below as follows
\[\mathfrak q_{h,a}(\varphi_{1,h}u) \geq \int_{\Omega}|\curl\Fb_a| \,|\varphi_{1,h}u|^2\,dx\geq |a|h\big\| \varphi_{1,h}u\big\|^2.   \]
Summing up, we deduce the following  lower bound  on the quadratic form
\[ \mathfrak q_{h,a }(u)\geq \int_\Omega \big(U_{h,a}(x)-\mathcal O(R_0^{-2}h)\big)|u(x)|^2\,dx\quad(u\in H^1_0(\Omega)),\]
where 
\[U_{h,a}(x)=\begin{cases}
|a|h &{\rm if~}{\rm dist}(x,\Gamma)>R_0 h^{1/2} \\
\beta_a h&{\rm if~}{\rm dist}(x,\Gamma)<R_0 h^{1/2}
\end{cases}.
\]
This allows  us to do Agmon estimates and arrive at the following decay property of eigenfunctions $u_h$ with eigenvalues $z_h\leq \beta_a h+o(h)$:
\begin{equation}\label{eq:dec-gs}
 \int_\Omega \Big(|u_h|^2+h^{-1}|(h\nabla-i\Fb_{a})u_h|^2 \Big)\exp\Big(\frac{\alpha\,{\rm dist}(x,\Gamma)}{h^{1/2}} \Big)\,dx\leq C\|u_h\|_{L^2(\Omega)},
 \end{equation}
 for some positive  constants $\alpha$ and $C$.

As a consequence of \eqref{eq:dec-gs} (and the inequality $e^z\geq \frac{z^n}{n!}$ for $z\geq 0$), we get for any positive integer $n$,
\begin{equation}\label{eq:dec-gs*}
\int_\Omega \big({\rm dist}(x,\Gamma)\big)^n\Big(|u_h|^2+h^{-1}|(h\nabla -i\Fb_{a})u_h|^2\Big)\,dx\leq C_nh^{n/2}\|u_h\|^2_{L^2(\Omega)},
\end{equation}
for a positive constant $C_n$.

\subsubsection{Lower bound on the ground state energy}

Pick a ground state $u_h$ of $\lambda_1(\mathcal P_{h,a})$ and extend it by $0$ on $\R^2\setminus\Omega$. We will bound the quadratic form from below as follows
\begin{equation}\label{eq:qf-uh-lb}
\mathfrak q_{h,a}(u_h)\geq \Big(\beta_a h+M_3(a) k_{\max }^\Omega h^{\frac 32}-\mathcal O(h^{\frac {5}3})\Big)\|u_h\|^2_{L^2(\Omega)}.
\end{equation}
Set $\epsilon_h=h^{\frac12-\delta}$, with $\delta\in(0,\frac1{12})$. Consider two partitions of unity
\[ \sum\limits_{i=1}^2\varphi_{i,h}^2=1~{\rm  in~} \R^2,~ {\supp}\varphi_{1,h}\subset\R^2\setminus\Gamma_{\epsilon_h},~\sum\limits_{i=1}^2|\nabla\varphi_{i,h}|^2=\mathcal O(h^{2\delta-1}),\] 
and, for a fixed $\rho\in(0,\frac12)$,
\[ \sum_{j=1}^{N_h} \chi_{h,j}^2=1~{\rm in }~\Gamma_{h^{\rho}}, ~{\supp}\,\chi_{h,j}\subset \mathcal N(x_j,h^{\rho}),~\sum_{j=1}^{N_h} |\nabla\chi_{j,h}|^2=\mathcal O(h^{-2\rho}),\]
with $x_j=\Phi(s_j,0)\in\Gamma\cap\overline{\Omega}$, $x_1=p$, $x_2=q$ and $\{p,q\}=\Gamma\cap\partial\Omega$. Set $w_h=\varphi_{2,h}u_h$. By \eqref{eq:dec-gs}
\begin{equation}\label{eq:wh=uh}
\|w_h\|_{L^2(\Omega)}=\|u_h\|_{L^2(\Omega)}+\mathcal O(h^\infty)~\&~\mathfrak q_{h,a}(w_h)=\mathfrak q_{h,a}(u_h)+\mathcal O(h^\infty).
\end{equation}
Now, we  decompose $\mathfrak q_{h,a}(w_h)$ via  the  partition of unity along $\Gamma$ as follows 
\begin{equation}\label{eq:q-ha-w}
 \mathfrak q_{h,a}(w_h)=\sum_{j=1}^{N_h} \mathfrak q_{h,a}(w_{h,j})+\mathcal O(h^{2-2\rho})\|w_h\|^2_{L^2(\Omega)}~{\rm with~}w_{h,j}=\chi_{h,j}\varphi_{2,h}u_h.\end{equation}
Performing  a local gauge transformation in $\mathcal N(x_j,h^{\rho})$ as in Lemma~\ref{lem:Anew2}, we get a new function $\tilde w_{h,j}$ such that
\[ 
\mathfrak q_{h,a}(w_{h,j})
=\int_\R\int_{-\epsilon_h}^{\epsilon_h} 
\left(\mathfrak a^{-2}\Big|\Big(h\partial_s+i\sigma t-\frac{\sigma t^2}{2}k(s)\Big)\tilde{w}_{h,j}\Big|^2
+h^2|\partial_t\tilde{w}_{h,j}|^2 \right)\, \mathfrak a\,dt\,ds 
\]
In every $\mathcal N(x_j,h^\rho)$, we expand  
\[\kappa(s)=\kappa_j+\mathcal O(h^{\rho}),~\mathfrak a=1-t\kappa_j+\mathcal O(h^{\rho}t),~\mathfrak a^{-2}=1+2\kappa_j t+\mathcal O(h^{\rho}t),\]
where, 
\begin{equation}\label{eq:min-kappa}
\kappa_j:=\kappa(\bar s_j)=\min_{|s-s_j|\leq h^{\rho}}\kappa(s), ~x_j=\Phi(s_j,0)~\&~\bar s_j\in\{|s-s_j|\leq h^{\rho}\}\,.
\end{equation}
For every integer $n\geq 0$, we write by \eqref{eq:dec-gs*},
\begin{align*}
&\sum_{j=1}^{N_h}\int_\R\int_{-\epsilon_h}^{\epsilon_h}  |t|^n| \tilde w_{h,j}|^2dt\,ds\leq \tilde C_n h^{\frac{n}2}\|u_h\|^2_{L^2(\Omega)}\\
&\sum_{j=1}^{N_h}\int_\R\int_{-\epsilon_h}^{\epsilon_h}  h^2|t|^n|\partial_t \tilde w_{h,j}|^2dt\,ds\leq \tilde C_n h^{1+\frac{n}2}\|u_h\|^2_{L^2(\Omega)}\\
&\sum_{j=1}^{N_h}\int_\R\int_{-\epsilon_h}^{\epsilon_h}  |t|^n\Big|\Big(h\partial_s+i\sigma t-\frac{\sigma t^2}{2}k(s)\Big)\tilde{w}_{h,j}\Big|^2dt\,ds\leq {\tilde C_n} h^{1+\frac{n}2}\|u_h\|^2_{L^2(\Omega)}.
\end{align*}
That way we get
\begin{multline}\label{eq:sum-q-h,a-wj*}
 \sum_{j=1}^{N_h} \mathfrak q_{h,a}(w_{h,j}) \geq
 \sum_{j=1}^{N_h}\int_\R\int_{-\epsilon_h}^{\epsilon_h}\Big((1+2\kappa_j)t)\Big|\Big(h\partial_s+i\sigma t-\frac{\sigma t^2}{2}k_j\Big)\tilde{w}_{h,j}\Big|^2\\
+h^2|\partial_t\tilde{w}_{h,j}|^2 \Big)\, (1-t\kappa_j)\,dt\,ds  -\mathcal O(h^{\frac32+\rho})
\end{multline}
In each $\{|s-s_j|<h^{\rho}\}\cap\{|t|<h^{\frac12-\delta}\}$, we perform a partial Fourier transform w.r.t. $s$ and the scaling $t\mapsto \tau=h^{-\frac12}t$. We then reduce to the setting of Proposition~\ref{prop:1dw} and get, after summing over $j$,
\begin{equation}\label{eq:sum-q-h,a-wj}
\sum_{j=1}^{N_h} \mathfrak q_{h,a}(w_{h,j})\geq h\sum_{j=1}^{N_h}\Big( \beta_a+M_3(a)\kappa_jh^{\frac12}+\mathcal O(h^{\frac{3}{4}}) -\mathcal O(h^{\frac12+\rho})\Big)\|w_{h,j}\|^2_{L^2(\Omega)}.
\end{equation}
Noticing  that $\sum\limits_{j=1}^{N_h}\|w_{h,j}\|^2_{L^2(\Omega)}=\|w_h\|^2_{L^2(\Omega)}$, the following holds
\begin{multline*}
\sum_{j=1}^{N_h} \mathfrak q_{h,a}(w_{h,j})\geq\\
  h\int_\R\int_{-\epsilon_h}^{\epsilon_h}\Big( \beta_a+M_3(a)\kappa(s)h^{\frac12}-\mathcal O(h^{\frac{3}{4}}) -\mathcal O(h^{\frac12+\rho})\Big)|\tilde w_{h}|^2(1-t\kappa(s))dtds
\end{multline*}
since $M_3(a)<0$, by Proposition~\ref{prop:moment}, and $\kappa_j\leq \kappa(s)$ in the support of $w_{h,j}$, by \eqref{eq:min-kappa}.
 Inserting this into \eqref{eq:q-ha-w}, we get
\[q_{h,a}(w_h)\geq \int_\R\int_{-\epsilon_h}^{\epsilon_h}\Big( \beta_a h+M_3(a)\kappa(s)h^{\frac32}-\mathcal O\big(\max(h^{\frac{7}{4}},h^{\frac32+\rho}, h^{2-2\rho})\big)\Big)|\tilde w_{h}|^2dtds\,.
\]
Now, by \eqref{eq:wh=uh}, we get 
\[ \lambda_1(\mathcal P_{h,a})\geq \beta_ah +M_3(a)\kappa_{\max}h^{\frac32}-\mathcal O \big(\max(h^{\frac{7}{4}},h^{\frac32+\rho}, h^{2-2\rho})\big)\,.\]
Optimizing, we choose $\rho=\frac16$ and get that the remainder is $\mathcal O(h^{\frac{5}3})$.

\begin{rem}\label{rem:gs-conc}
Let us introduce the potential 
\[ U^\Gamma_{h,a}(x) =\begin{cases}
|a|h&{\rm if~}{\rm dist}(x,\Gamma)>2h^{\frac16}\\
\beta_ah+M_3(a)\kappa(s)h^{\frac32}&{\rm if}~{\rm dist}(x,\Gamma)<2h^{\frac16}~\&~x=\Phi(s,t)
\end{cases}.\]
Then, repeating the foregoing proof  (with  $\rho=\frac16$) on the Schr\"odinger operator
\[\mathcal P_{h,a}-U^\Gamma_{h,a},\]
we get that its ground state energy satisfies
\[\lambda(h,a,\Gamma)\geq - \mathfrak c h^{\frac{5}3} \]
for some positive  constant $\mathfrak c$. Therefore, we deduce that, for any $u\in H^1_0(\Omega)$, the following inequality holds
\begin{equation}\label{eq:qh-u-lb}
\mathfrak q_{h,a}(u)\geq \int_\Omega\big(U_{h,a}^\Gamma(x)-\mathfrak c  h^{\frac{5}3}\big)\big)|u|^2\,dx.
\end{equation}
The inequality in \eqref{eq:qh-u-lb} yields that the ground states of $\mathcal P_{h,a}$ are localized near the set of maximal magnetic edge curvature, $\Pi_\Gamma=\{\kappa(s)=\kappa_{\max}^\Omega\}$. We omit the details and refer the reader to \cite[Thm.~8.3.4]{fournais2010spectral}. 
\end{rem}

\subsection{Superconductivity along the magnetic edge}\label{sec:GL}\

The new estimate $\beta_a<\Theta_0$ in Theorem~\ref{thm:main} gives an integrated description of the nucleation of superconductivity in type-II superconductors subject to magnetic steps fields with certain intensity, considered for instance in~\cite{Assaad2019}.

In the context of superconductivity,  the set $\Om$ introduced in~Section~\ref{sec:2D-curv} models the horizontal cross section of a cylindrical superconductor-sample, with a large characteristic parameter $\kappa$ and submitted  to the magnetic field $HB_a$, where $B_a$ is as in \eqref{eq:curl-F},  $a\in(-1,0)$, and  the parameter $H>0$ measures the intensity of the magnetic field. The superconducting properties of the sample are described by the minimizing configurations of the following Ginzburg--Landau (GL) energy functional:
 \begin{equation}\label{eq:GL}
 \GL (\psi,\Ab)= \int_\Om \Big( \big|(\nb-i\kp H {\mathbf
 	A})\psi\big|^2-\kp^2|\psi|^2+\frac{\kappa^2}{2}|\psi|^4 \Big)\,dx
 +\kp^2H^2\int_\Om\big|\curl\Ab-B_a\big|^2\,dx,
 \end{equation}
where $\psi \in H^1(\Om;\C)$ is the order parameter, and  $\Ab\in H^1(\Om;\R^2)$ is the induced magnetic field. For a fixed $(\kappa,H)$, the infimum of the energy--the  ground state energy--is attained by a minimizer $(\psi^{\mathrm {GL}},\Ab^{\mathrm {GL}})_{\kappa,H}$.

In  \cite{Assaad2019}, the limit profile of  $|\psi^{\rm GL}|^4$ is determined in the sense of distributions in the regime where $H=b\kappa$ and $\kappa\to+\infty$, with $b>\frac1{|a|}$ a fixed constant. More precisely, the following convergence holds
 
 \[\kappa\mathcal T_\kappa^b  \rightharpoonup \mathcal T^b\ \mathrm{in}\ \mathcal D'(\R^2),\ \mathrm{as}\ \kappa\rightarrow+\infty,\]
 where 	 
 \[C_c^\infty(\R^2)\ni \varphi\mapsto\mathcal T_\kappa^b(\varphi)=\int_\Omega |\psi^{\rm GL}|^4 \varphi\,dx\]
 	and the limit distribution $\mathcal T^b$ is defined via three distributions related to the edges $\Gamma$, $\Gamma_1=(\partial\Omega_1)\cap(\partial\Omega)$ and $\Gamma_2=(\partial\Omega_2)\cap(\partial\Omega)$ as follows
\[
C_c^\infty(\R^2) \ni \varphi \mapsto \mathcal T^b(\varphi)=-2b^{-\frac 12}
\big(\mathcal T^b_{\Gamma}(\varphi)+\mathcal T^b_{\Gamma_1}(\varphi)+ \mathcal T^b_{\Gamma_2}(\varphi)\big),\]
with
\begin{multline*}
\mathcal T^b_{\Gamma}(\varphi):=\mathfrak e_a(b)\int_{\Gamma}\varphi \,ds_{\Gamma},
\quad \mathcal T^b_{\Gamma_1}(\varphi)= E_\mathrm{surf}(b)\int_{\Gamma_1}\varphi \,ds ~\&\\
\mathcal T^b_{\Gamma_2}(\varphi\big)=|a|^{-\frac 12}E_\mathrm{surf}\big(b|a|\big)\int_{\Gamma_2}\varphi \,ds.
\end{multline*}
The effective energies $\mathfrak e_a$ and $E_\mathrm{surf}$  correspond respectively to the contribution of the magnetic edge $\Gamma$ and the boundary $\partial \Om$ (see \cite{Assaad2019, Correggi} for the precise definitions). They have  the following properties: 
\begin{itemize}
	\item $\mathfrak e_a(b)=0$ if and only if $b\geq 1/\beta_a$.
		\item $E_\mathrm{surf}(b)=0$ if and only if $b\geq 1/\Theta_0$.
\end{itemize}
Based on the results above, a detailed discussion on the distribution of superconductivity near $\Gamma\cup\partial \Om$ has been done in~\cite[Section~1.5]{Assaad2019}. This discussion mainly relies on the order of the values $|a|\Theta_0$, $\beta_a$ and $\Theta_0$. With the existing estimates in this paper (and \cite{Assaad2019}), we have
\[|a|\Theta_0<\beta_a<\min(\Theta_0,|a|)~{\rm for}~a\in(-1,0).\]
Consequently, we observe that (see Fig~\ref{fig1} for illustration)

\begin{figure}
	\begin{subfigure}{.3\linewidth}
		\centering
	\includegraphics[scale=0.5]{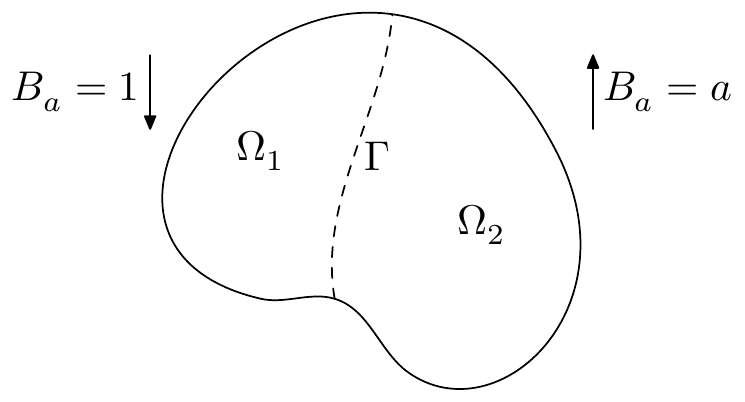}
	\end{subfigure}%
	\begin{subfigure}{.3\linewidth}
		\centering
		\includegraphics[scale=0.5]{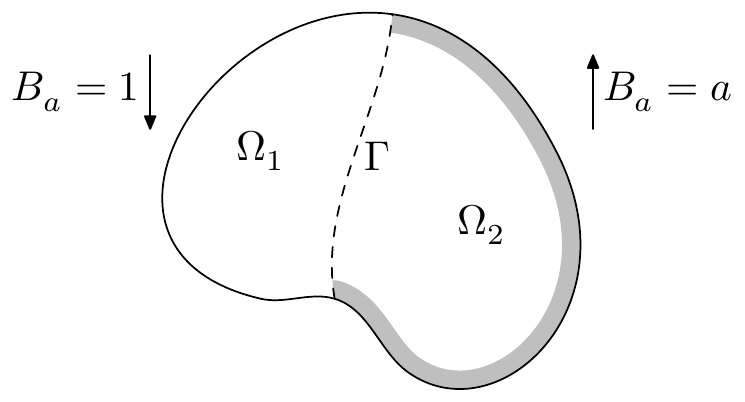}
	\end{subfigure}%
	\begin{subfigure}{.3\linewidth}
		\centering
		\includegraphics[scale=0.5]{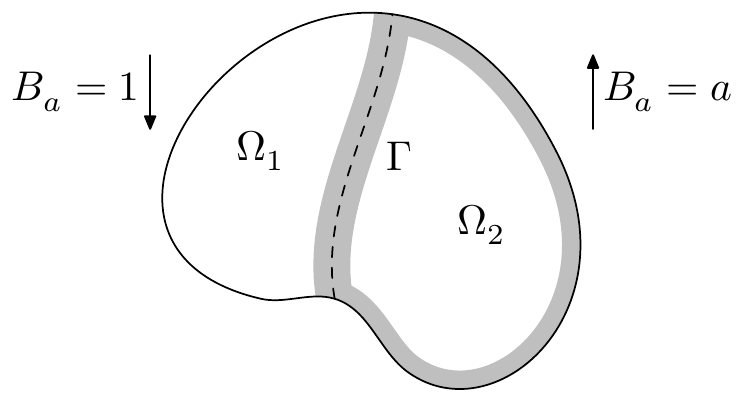}
	\end{subfigure}
	\caption{Superconductivity localization in the set $\Omega$ submitted to the magnetic field $B_a$, for $a\in (-1,0)$, with intensity $H=b\kappa$, where respectively $b\geq b_{c,3}:=\frac1{|a|\Theta_0}$, $b_{c,2}:=\frac1{\beta_a} \leq b<b_{c,3}$ and $b_{c,1}:=\max(\frac1{|a|},\frac1{\Theta_0})\leq b<b_{c,2}$. Only the grey regions carry superconductivity.}
	\label{fig1}
\end{figure}
\begin{itemize}
\item $\mathcal T^b=0$ for $b\geq b_{c,3}:=\frac1{|a|\Theta_0}$\,;\medskip
\item $\mathcal T^b_{\Gamma_1}=\mathcal T^b_{\Gamma}=0$ \& $\mathcal T^b_{\Gamma_2}\not=0$ for 
$b_{c,2}:=\frac1{\beta_a} \leq b<b_{c,3}$\,;\medskip
\item $\mathcal T^b_{\Gamma_1}=0$, $\mathcal T^b_{\Gamma_1}\not=0$ and $\mathcal T^b_{\Gamma_2}\not=0$ for 
$b_{c,1}:=\max(\frac1{|a|},\frac1{\Theta_0})\leq b<b_{c,2}$.
\end{itemize}

\end{document}